\documentclass[11pt]{amsart}

\usepackage{amsmath,amssymb,mathtools}
\usepackage{enumitem}
\usepackage{microtype}
\usepackage[colorlinks=true,linkcolor=blue,citecolor=blue,urlcolor=blue,
  pdftitle={Complexity-Sensitive Additive Energy and Off-Diagonal Young Inequalities on Bounded-Degree Algebraic Varieties},
  pdfauthor={Xiyu Hu},
  pdfkeywords={additive energy, algebraic varieties, polynomial partitioning, Young inequality, difference varieties, convex curves}]{hyperref}
\usepackage[nameinlink,noabbrev]{cleveref}

\numberwithin{equation}{section}

\newtheorem{theorem}{Theorem}[section]
\newtheorem{proposition}[theorem]{Proposition}
\newtheorem{lemma}[theorem]{Lemma}
\newtheorem{corollary}[theorem]{Corollary}

\theoremstyle{definition}

\theoremstyle{remark}
\newtheorem{remark}[theorem]{Remark}

\newcommand{\R}{\mathbb R}
\newcommand{\C}{\mathbb C}
\newcommand{\Z}{\mathbb Z}
\newcommand{\T}{\mathbb T}

\newcommand{\cB}{\mathcal B}
\newcommand{\cD}{\mathcal D}
\newcommand{\cF}{\mathcal F}
\newcommand{\cJ}{\mathcal J}

\newcommand{\cR}{\mathcal R}
\newcommand{\cS}{\mathcal S}

\newcommand{\supp}{\operatorname{supp}}

\newcommand{\Stab}{\operatorname{Stab}}
\newcommand{\Bad}{\operatorname{Bad}}
\newcommand{\bzero}{b_0}
\newcommand{\eps}{\varepsilon}
\newcommand{\1}{\mathbf 1}
\newcommand{\wt}[1]{\widetilde{#1}}
\newcommand{\card}[1]{\lvert #1\rvert}
\newcommand{\norm}[2][]{\lVert #2\rVert_{#1}}

\title[Additive energy on algebraic varieties]{Complexity-Sensitive Additive Energy and Off-Diagonal Young Inequalities on Bounded-Degree Algebraic Varieties}
\author{Xiyu Hu}
\address{School of Mathematical Sciences, University of Chinese Academy of Sciences}

\email{hxypqr@gmail.com}
\date{Working draft, August 19, 2026}

\subjclass[2020]{Primary 11B30, 42B10; Secondary 14P10, 52C10, 26D15}
\keywords{additive energy, algebraic varieties, polynomial partitioning, Young's inequality, difference varieties, convex curves}

\begin{document}

\begin{abstract}
We develop additive-energy estimates and weighted Young inequalities for finite sets on bounded-degree real algebraic varieties.  For an irreducible $m$-dimensional variety $V$, let
\[
 \sigma(V)=2m-\dim\overline{V-V}^{\,\mathrm{Zar}},
 \qquad
 \alpha(V)=\max\left\{2,1+\frac{2\sigma(V)}m\right\}.
\]
For every $a\in[\alpha(V),3)$ we define a finite-degree translation--partition flag parameter $\Lambda_{a,R}(X;V)$ and prove
\[
 E(X)\ll \Lambda_{a,R}(X;V)^{3-a}|X|^{a+\varepsilon}.
\]
This recovers the line-concentration theorem of Jing and Wu for algebraic surfaces in $\mathbb R^3$.  For codimension-two quadratic threefolds
\[
 \{(u,Q_1(u),Q_2(u)):u\in\mathbb R^3\}\subset\mathbb R^5
\]
with positive-definite $Q_1$ and simple generalized spectrum, we prove the sharp estimate $E(X)\ll_\varepsilon |X|^{2+\varepsilon}$ without a flag loss.  Hereditary versions of these estimates imply weighted $L^4$ restriction bounds and off-diagonal Young inequalities; at the near-diagonal threshold the sharp region is
\[
 1\le p,q\le2,\qquad p^{-1}+q^{-1}\ge1.
\]
We also prove a sharp turning-complexity extension of the Cushman--Demeter--Wu theorem: $J_3(P)\ll_\varepsilon \kappa(P)^2|P|^{3+\varepsilon}$, with matching examples at every power scale.
\end{abstract}

\maketitle

\section{Introduction}

\subsection{Additive energy and geometric support restrictions}
Let $G$ be a torsion-free abelian group and let $X\subset G$ be finite.  The additive energy of $X$ is
\begin{equation}\label{eq:intro-energy}
 E(X)=\sum_{t\in G}r_{X-X}(t)^2,
 \qquad
 r_{X-X}(t)=\#\{(x,y)\in X^2:x-y=t\}.
\end{equation}
It satisfies the universal bounds
\[
 2\card X^2-\card X\leq E(X)\leq \card X^3.
\]
The lower bound is the diagonal scale; the upper bound is attained, up to constants, by arithmetic progressions.  If $G_0$ denotes the subgroup generated by $X$, then $G_0\cong\Z^r$ for some $r$, and Plancherel gives
\begin{equation}\label{eq:intro-fourier}
 E(X)=\int_{\T^r}\left|\widehat{\1_X}(\xi)\right|^4\,d\xi.
\end{equation}
Thus a near-diagonal energy theorem is simultaneously a discrete $L^4$ restriction theorem.

Curvature and algebraic complexity can suppress additive multiplicity.  Bourgain and Demeter related such questions to decoupling and discrete restriction \cite{BourgainDemeter}.  Jing and Wu recently proved that if $F:\R^3\to\R$ is irreducible of degree at least two, then every finite $X\subset Z(F)$ satisfies
\begin{equation}\label{eq:jw-intro}
 E(X)\ll_{\deg V,\eps}\Lambda_V(X)\card X^{2+\eps},
 \qquad
 \Lambda_F(X)=\max\left\{1,\sup_{\ell\subset Z(F)}\card{X\cap\ell}\right\},
\end{equation}
where the supremum is over affine lines contained in the surface \cite{JingWu}.  Their theorem resolves the near-diagonal problem for arbitrary algebraic surfaces and identifies concentration on surface-contained lines as the only obstruction.

There is a related higher-order phenomenon on curves.  For a finite $P\subset\R^2$, define
\begin{equation}\label{eq:J3-intro}
 J_3(P)=\#\{(p_1,\dots,p_6)\in P^6:p_1+p_2+p_3=p_4+p_5+p_6\}.
\end{equation}
Cushman, Demeter, and Wu proved that $J_3(P)\ll_\eps \card P^{3+\eps}$ whenever $P$ lies on a strictly convex graph \cite{CDW}.  Their proof combines an interlacing structure inside equal-sum fibers with Fourier orthogonality for ordered chord cones.

The present paper has two goals.  The first is to identify the dimension and translation-complexity parameters that govern the polynomial-partitioning argument on general algebraic varieties.  The second is to convert the resulting set estimates into weighted and off-diagonal Young inequalities, in a form that records the sharp critical exponents.  This functional viewpoint is in the tradition of the sharp Young and Fourier inequalities of Beckner and Brascamp--Lieb \cite{Beckner,BrascampLieb}, while the geometric decomposition follows the polynomial-partitioning philosophy initiated by Guth and Katz \cite{GuthKatz} and developed over general varieties by Walsh.

\subsection{Difference varieties and the intrinsic exponent}
Let $V\subset\R^n$ be an irreducible real algebraic variety whose complexification $V_{\C}$ is irreducible and whose real locus is Zariski dense.  Put
\[
 m=\dim V_{\C},
 \qquad
 \cD(V)=\overline{V_{\C}-V_{\C}}^{\,\mathrm{Zar}}\subset\C^n,
 \qquad
 d(V)=\dim\cD(V).
\]
The difference map
\[
 \delta_V:V_{\C}\times V_{\C}\longrightarrow \cD(V),
 \qquad
 \delta_V(x,y)=x-y,
\]
has generic fiber dimension
\begin{equation}\label{eq:sigma-intro}
 \sigma(V)=2m-d(V).
\end{equation}
Accordingly, for a generic difference $t$, the translate intersection $V_{\C}\cap(V_{\C}+t)$ has dimension $\sigma(V)$.  The exponent produced by two applications of the crossing estimate is
\begin{equation}\label{eq:alpha-intro}
 \alpha(V)=\max\left\{2,1+\frac{2\sigma(V)}m\right\}.
\end{equation}
The diagonal contribution forces the first term.  The second term is the fixed point of the polynomial-partition recurrence.

If $V^m\subset\R^{m+c}$ has maximal difference dimension, then
\[
 d(V)=\min(2m,m+c),
 \qquad
 \sigma(V)=\max(0,m-c),
\]
and hence
\begin{equation}\label{eq:mc-intro}
 \alpha_{m,c}=\max\left\{2,3-\frac{2c}{m}\right\}.
\end{equation}
The line $m=2c$ is the critical boundary of the near-diagonal range.  The surface case $V^2\subset\R^3$ and the codimension-two cases $V^3\subset\R^5$ and $V^4\subset\R^6$ lie on or below this threshold.

\subsection{The flagged energy theorem}
A general variety can contain affine rulings, positive-dimensional translation stabilizers, and lower-dimensional subvarieties with a worse intrinsic exponent.  These phenomena prevent a uniform parameter-free theorem.  We therefore introduce, in \cref{sec:flag-definitions}, a finite-degree translation--partition flag parameter $\Lambda_{a,R}(X;V)$.  It records:
\begin{enumerate}[label=(\roman*)]
 \item concentration on cosets of translation stabilizers;
 \item concentration on translate intersections whose dimension exceeds the generic value;
 \item concentration on bounded-degree flag subvarieties whose intrinsic exponent is larger than the target exponent.
\end{enumerate}
The parameter is monotone under passage to subsets and satisfies $1\leq\Lambda_{a,R}(X;V)\leq\card X$.

Our main general theorem is the following.  The algebraic conventions and the precise definition of $\Lambda_{a,R}$ are given in \cref{sec:algebraic-preliminaries,sec:flag-definitions}.

\begin{theorem}\label{thm:flagged-main}
Let $n,\Delta\geq1$, let $2\leq a<3$, and let $\eps>0$.  There are constants
\[
 R=R(n,\Delta,a,\eps),
 \qquad
 C=C(n,\Delta,a,\eps)
\]
with the following property.  Let $V\subset\R^n$ be an admissible irreducible algebraic variety of degree at most $\Delta$ such that $\alpha(V)\leq a$.  Then every finite $X\subset V$ satisfies
\begin{equation}\label{eq:flagged-main-intro}
 E(X)\leq C\,\Lambda_{a,R}(X;V)^{3-a}\card X^{a+\eps}.
\end{equation}
\end{theorem}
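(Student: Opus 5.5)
We argue by induction on $\card X$, and inside that by induction on the dimension $m$ of the ambient flag variety. The induction is arranged so that the constant $C$ does not grow. The starting point is the rich-difference decomposition
\[
 E(X)\ll \sum_{j} 2^{2j}\,\card{T_j},\qquad T_j=\{t: r_{X-X}(t)\sim 2^j\}.
\]
For each dyadic level $j$ we bound $\card{T_j}$ by an incidence count: each $t\in T_j$ is a point for which the translate intersection $V\cap(V+t)$ contains at least $2^j$ points of $X$. The problem becomes a point--variety incidence problem in the difference variety $\cD(V)$, which has dimension $d(V)=2m-\sigma(V)$. We treat it by polynomial partitioning over $V$, in Walsh's form for points on a variety of degree $\le\Delta$. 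Choose a partitioning polynomial of degree $D=D(\eps)$, so that each cell contains $\lesssim \card X/D^{m}$ points of $X$.

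\textbf{Cellular part.} Pairs $(x,y)$ with $x-y=t$ lying in distinct cells or the same cell are counted by applying the induction hypothesis cellwise, together with a crossing estimate. A generic translate $V\cap(V+t)$ has dimension $\sigma(V)$, so it meets $\lesssim D^{\sigma}$ cells. Applying the crossing estimate twice, once in $x$ and once in $y$, gives a cellular contribution of
\[
 D^{m}\cdot D^{2\sigma}\,(\card X/D^m)^{a+\eps}\cdot(\text{flag factor}).
\]
This is at most $\tfrac12 C\Lambda^{3-a}\card X^{a+\eps}$ exactly when $m+2\sigma-ma-m\eps<0$, that is, when $a\ge 1+2\sigma/m=\alpha(V)$, with the $D^{-m\eps}$ saving absorbing the implicit constants. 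This is where the hypothesis $\alpha(V)\le a$ enters. Monotonicity of $\Lambda_{a,R}$ under subsets lets us use $\Lambda_{a,R}(X\cap\text{cell};V)\le\Lambda_{a,R}(X;V)$.

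\textbf{Non-generic $t$.} Translates $t$ whose intersection $V\cap(V+t)$ has dimension exceeding $\sigma$ lie in a proper subvariety of $\cD(V)$ of bounded degree. Here we split into the three cases recorded in the definition of $\Lambda_{a,R}$.

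(i) If $t$ lies in a translation stabilizer coset, the contribution is bounded by $\max\card{X\cap\text{coset}}\cdot\card X^{2}$. This is $\le\Lambda^{3-a}\card X^{a}$ by interpolating the trivial bound $\card{X\cap\text{coset}}\le\Lambda$ with $\card X$.

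(ii) If $V\cap(V+t)$ has excess dimension but is not a stabilizer coset, its points are absorbed by the flag parameter in the same way.

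(iii) The remaining case is the wall $Z(P)\cap V$, a variety of dimension $m-1$ and degree $\lesssim D\Delta$. We decompose it into irreducible components $W$. If $\alpha(W)\le a$, we apply the induction on dimension to $X\cap W$. If $\alpha(W)>a$, then $W$ is a flag subvariety recorded by $\Lambda_{a,R}$, and we use the trivial bound $E(X\cap W)\le\card{X\cap W}^3\le \Lambda^{3-a}\card{X\cap W}^{a}$. Mixed energy between different components, and between the wall and the cells, is controlled by Cauchy--Schwarz, $E(A,B)\le E(A)^{1/2}E(B)^{1/2}$, plus the bound on the number of components. Choosing $R$ to exceed the degrees $D\Delta$ appearing across all $\le n$ levels of the dimension induction makes the flag parameter cover every subvariety that arises.

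\textbf{Main obstacle.} The key difficulty is the wall step (iii). We must ensure that iterating over the dimension filtration loses only $C(n,\Delta,a,\eps)$ and not a power of $\card X$. We must also ensure that the cross-energy between wall points and cell points is genuinely controlled by the crossing estimate. The first thing to try is to apply the crossing estimate to translates $V\cap(V+t)$ that are not contained in $Z(P)$, which bounds incidences with the wall by the generic term. Translates contained in $Z(P)$ are treated as non-generic and charged to $\Lambda$. Whether the resulting bound exactly matches the exponent $3-a$ on $\Lambda$ requires checking the interpolation $\min(\Lambda\card X^2,\ \card X^{a}\cdot\ldots)$ carefully in each case.
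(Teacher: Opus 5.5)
Your architecture matches the paper's. The ingredients are partitioning on the variety, crossing counts governed by the generic dimension $\sigma$ of $W\cap(W+t)$, exceptional translations charged to $\lambda_W\le\Lambda$ via $\lambda M^2\le\lambda^{3-a}M^a$, and wall components handled by induction on dimension or by $\card{B}^3\le\Lambda^{3-a}\card{B}^a$. The pieces are glued by $E(A,B)\le E(A)^{1/2}E(B)^{1/2}$, which does the job of the paper's fourth-root union bound.

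The gap is that the step carrying the weight, the cellular bound $D^{2\sigma}\sum_iE(A_i)$, is asserted rather than derived, and the mechanism you name (``once in $x$ and once in $y$'') does not give it. The first crossing gives $O(D^\sigma)$ active cell \emph{pairs}; this comes from \cref{thm:component-bound} applied to $W\cap(W+t)$ with both $P(x)$ and $P(x-t)$, not from counting cells for $x$ and $y$ separately. Even with that count you are left with $D^\sigma\sum_{i,j}E(A_i,A_j)$, and the induction hypothesis controls only the diagonal terms $E(A_i)$. Bounding the off-diagonal terms by Cauchy--Schwarz costs the number of cells, about $D^k$ with $k=\dim W$. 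The contraction condition then becomes $a+\eps>2+\sigma/k$, which fails at $a=2$ for surfaces in $\R^3$ ($\sigma=1$, $k=2$). The missing idea is the identity
\[
 \sum_{i,j}E(A_i,A_j)=\sum_u\Bigl(\sum_i r_{A_i-A_i}(u)\Bigr)^2 .
\]
It recounts a quadruple $x_1-y_1=x_2-y_2$ (with $x_1,x_2\in A_i$ and $y_1,y_2\in A_j$) through its within-cell difference $u=x_1-x_2=y_1-y_2$. For $u\notin\Bad(W)$ the same crossing count shows that only $O(D^\sigma)$ cells have $r_{A_i-A_i}(u)>0$. This supplies the second factor $D^\sigma$ and leaves only self-energies. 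The $u\in\Bad(W)$ must again be charged to \cref{lem:exceptional-energy}; your treatment of non-generic $t$ covers only the first crossing.

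Conversely, what you flag as the main obstacle is not one. The inequality $E(A,B)\le E(A)^{1/2}E(B)^{1/2}$ that you already invoke gives $E(Y\sqcup A_0)\le8\bigl(E(Y)+E(A_0)\bigr)$ with no crossing information at all. $E(A_0)$ is then bounded by the lower-dimensional statement on $O(1)$ components, so each dimension level costs only a fixed factor.

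Your fallback of charging generic-dimension translate intersections lying in $Z(P)$ to $\Lambda$ would be illegitimate. $\Lambda$ controls only stabilizer cosets, excess-dimensional translate intersections, and flag subvarieties with $\alpha>a$. For example, on the paraboloid $z=x^2+y^2$ generic translate intersections are parabolas, which may carry all of $X$ while $\Lambda=1$.

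Two smaller points:
\begin{enumerate}
 \item The dyadic/incidence reformulation in $\cD(V)$ is never used and can be dropped.
 \item A single $D=D(\eps)$ with $R\ge D\Delta$ does not suffice, because the partitioning and component-bound constants depend on the degree of the current node. As in the paper, $D_k$ must be chosen after $\Delta_k$, and $\Delta_{k-1}$ after $D_k$, with $R=\max_k\Delta_k$.
\end{enumerate}
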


Taking $a=\alpha(V)$ gives the intrinsic form whenever $\alpha(V)<3$.  The endpoint $\alpha(V)=3$ is governed by the universal trivial estimate $E(X)\leq|X|^3$ and has no contractive partition recurrence.  When $V$ is an irreducible nonplanar surface in $\R^3$, the flag parameter is comparable, with degree-dependent constants, to $\Lambda_F(X)$ in \eqref{eq:jw-intro}; see \cref{cor:recover-jw}.  Thus \cref{thm:flagged-main} recovers the theorem of Jing and Wu.  Its proof follows their two-crossing mechanism, but runs on a spatial flag of partition walls and uses the difference variety to determine the correct fiber dimension.

The power $3-a$ has the expected interpolation form.  If a base example has energy of order $N^a$ and one takes its Cartesian product with an arithmetic progression of length $L$, then the resulting energy is of order $L^{3-a}N^a$.  In particular, the power is sharp in the classes for which the base exponent is attained.

\subsection{A sharp codimension-two threefold theorem}
The flag parameter in \cref{thm:flagged-main} is deliberately safe.  It can overcount exceptional translate fibers that have large individual occupancy but small total energy.  The first nontrivial example where this distinction matters is a three-dimensional quadratic graph in $\R^5$.

Let $Q_j(u)=u^{\mathsf T}A_ju$ be quadratic forms on $\R^3$.  We say that $(Q_1,Q_2)$ has simple generalized spectrum if $A_1$ is positive definite and $A_1^{-1/2}A_2A_1^{-1/2}$ has three distinct eigenvalues.

\begin{theorem}\label{thm:quadratic-threefold-intro}
Let
\[
 \Sigma=\{(u,Q_1(u),Q_2(u)):u\in\R^3\}\subset\R^5,
\]
where $(Q_1,Q_2)$ has simple generalized spectrum.  Then, for every $\eps>0$ and every finite $X\subset\Sigma$,
\begin{equation}\label{eq:quadratic-threefold-intro}
 E(X)\ll_\eps \card X^{2+\eps}.
\end{equation}
The exponent $2$ is sharp.
\end{theorem}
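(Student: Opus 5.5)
The plan is to count solutions of $x_1-x_2=x_3-x_4$ on $\Sigma$ directly by slicing along differences. Write points of $\Sigma$ as $(u,Q_1(u),Q_2(u))$. A difference $t=(h,s_1,s_2)$ with $h\neq0$ is realised by a pair $(u+h,u)$ exactly when
\[
 2u^{\mathsf T}A_jh+Q_j(h)=s_j,\qquad j=1,2.
\]
These are two affine-linear conditions on $u\in\R^3$. Hence for $h\neq0$ the translate fiber $\Sigma\cap(\Sigma+t)$ projects to a line in $u$-space when $A_1h$ and $A_2h$ are linearly independent, and to a plane (or the empty set) when they are parallel. The parallel case means $A_1^{-1}A_2h\parallel h$, i.e.\ $h$ is a generalized eigenvector. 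Simple generalized spectrum therefore confines the degenerate directions to three lines $\R e_1,\R e_2,\R e_3$. The contribution of $h=0$ is the diagonal part: the map $u\mapsto(Q_1,Q_2)$ is injective on the fiber, so $t=0$ only, giving $|X|^2$.

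For generic $h$, set $r(t)=|X\cap\ell_t|$, where $\ell_t$ is the fiber line lifted to a conic (a quadratic curve) in $\Sigma$. I would bound $\sum_t r(t)^2$ by an incidence argument. Each pair $(x,y)\in X^2$ lies on exactly one such curve, namely the one indexed by $t=x-y$. The family of fiber curves is a $5$-parameter family of bounded-degree curves in the $3$-fold. Two distinct fiber curves meet in $O(1)$ points unless they coincide, and they can coincide only for $t$ related by a bounded symmetry. This gives the bound $\sum_t r(t)^2\lesssim |X|^2+\sum_{k\ \mathrm{dyadic}} k^2 N_k$, where $N_k$ is the number of $k$-rich curves. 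Since $r_{X-X}(t)\ge k$ forces $X\cap\ell_t\supset X\cap(\ell_t+\text{shift})$, and pairs on one curve with fixed difference are determined by one point, I would estimate $N_k$ via polynomial partitioning in $\R^3$ ($u$-coordinates). The curves project to lines in $u$-space, but the lines indexed by $t$ are not arbitrary. Their directions are $A_1h\times A_2h$, a quadratic map of $h$ whose image is nondegenerate precisely by the simple spectrum hypothesis. The aim is a Guth--Katz type bound $N_k\lesssim |X|^{2+\eps}k^{-3}+|X|k^{-1}$ (points/lines in $\R^3$ with no more than $O(\sqrt{\cdot})$ lines in a plane or regulus). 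That bound sums to $|X|^{2+\eps}$.

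The main obstacle is controlling concentration of the relevant lines in planes and reguli, which is exactly where a flag loss would otherwise enter. There are two sources. The first is the degenerate directions $h\in\R e_i$, whose fibers are planes. For these I would argue that inside such a plane the two equations collapse to a single one. The restriction of $\Sigma$ to $u\in$ plane $\times$ fixed $\R e_i$-shift is then a $2$-dimensional quadratic graph. Its energy contribution, summed over $h$ on a line, is controlled by the Jing--Wu surface estimate \eqref{eq:jw-intro}, or equivalently \cref{thm:flagged-main} for surfaces with $a=2$, applied to a nonplanar quadric surface containing no lines where positive-definiteness of $Q_1$ holds. The second source is planes or reguli containing many fiber lines. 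Here I would use positive-definiteness of $Q_1$ to show that a ruled quadric contains only $O(1)$ lines of the family per point of $\Sigma$, so large plane concentration forces $X$ onto a paraboloid slice, where again the surface bound applies.

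Sharpness is immediate from the universal lower bound $E(X)\ge 2|X|^2-|X|$.
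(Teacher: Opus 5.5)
\textbf{Gap: the differences-to-curves reduction.} The step that fails is your claim that two fiber curves coincide only for $t$ related by a bounded symmetry. The fiber of $t=(h,s_1,s_2)$ through a point $u$ is the line $u+\R(A_1h\times A_2h)$. This line is unchanged if $h$ is replaced by $ch$ (with $s_1,s_2$ adjusted). So the differences form a five-parameter family, but their fiber lines form only a four-parameter one, and every fiber line is shared by a one-parameter family of differences. Converting $\sum_t r(t)^2$ into a sum over curves therefore costs the multiplicity $m(L)=\#\{t\in X-X:\text{the fiber of }t\text{ is }L\}$, and this multiplicity is unbounded.

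Here is a concrete configuration. Take a generic affine plane $u_0+\operatorname{span}(e',f')$, with $e',f'$ simultaneously diagonalizing $Q_1$ and $Q_2$ on the span. Then $f'^{\mathsf T}A_je'=0$ for $j=1,2$, so for $h\in\R e'$ (not a rank-drop direction) the fiber through any point of the plane is parallel to $f'$. Let $A=\{u_0+ie'+jf':1\le i,j\le M\}$ and $N=M^2$. The difference $\Phi(u_0+i_0e'+jf')-\Phi(u_0+(i_0-i)e'+jf')$ does not depend on $j$. Hence the $\binom M2$ distinct differences indexed by $1\le i<i_0\le M$ each have $r(t)=M$. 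This energy, $\sim N^2/2$, is carried by only $M-1$ fiber lines, each shared on average by $\sim N^{1/2}$ differences. In particular $\gtrsim N$ differences are $N^{1/2}$-rich, whereas your target $N^{2+\eps}k^{-3}+Nk^{-1}$ is $\sim N^{1/2+\eps}$ at $k=N^{1/2}$.

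That target bound is true for rich \emph{lines} (it is Szemer\'edi--Trotter), but false for rich \emph{differences}, and nothing in the proposal bridges the factor $m(L)$. Controlling $m(L)$ is an energy problem for points in planes, which is exactly the planar concentration you defer. The remark that large plane concentration ``forces $X$ onto a paraboloid slice'' is not a decomposition: there may be many moderately concentrated planes, and their mixed energies with the rest of $X$ must still be bounded.

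\textbf{How the paper avoids this.} The paper never counts rich lines. It partitions $A\subset\R^3$ with a polynomial $q$ of fixed degree $D$. For $t\notin\cR$ the fiber line meets only $O(D)$ cell pairs, so two Cauchy--Schwarz crossings give $E(Y)\ll DN^2+D^2\sum_iE(X_i)$. With $O(D^3)$ cells of size $O(N/D^3)$, the induction gains a factor $D^{-1-3\eps}$. How many differences share a fiber line is irrelevant there; only the number of cell pairs met by a fixed $t$ matters. Planar concentration such as the grid above falls into the wall $A\cap Z(q)$. Its components lift under $\Phi$ to line-free surfaces and curves in $\Sigma$, which are handled by Jing--Wu (after a Freiman projection to $\R^3$) and by Bezout, with the fourth-root triangle inequality absorbing cross terms.

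\textbf{Rank-drop directions.} Your treatment here also targets the wrong quantity. In normal form, for $h\in\R e_1$ the difference determines the pair $(a,b)$ of first coordinates. Then $r(t)=\#\{z:(a,z),(b,z)\in A\}\le\min(n_a,n_b)$, where $n_a$ counts points of $A$ with first coordinate $a$. This is a mixed count between two different slices, not the self-energy of one slice. Its total is at most $\sum_{a,b}n_an_b=N^2$ (the paper's Gram-matrix lemma), and no surface theorem is needed.

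Your sharpness remark is correct.
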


After linear changes of variables, the model becomes
\[
 \Phi(u)=\bigl(u,|u|^2,\lambda_1u_1^2+\lambda_2u_2^2+\lambda_3u_3^2\bigr),
 \qquad \lambda_i\neq\lambda_j.
\]
For a generic difference, the parameter fiber is an affine line.  It becomes a plane only when the parameter displacement lies in one of the three eigendirections.  A Gram-matrix argument shows that the total energy of all these rank-drop directions is $O(N^2)$.  Polynomial partitioning in the parameter space produces $O(D^3)$ cells, while the two crossing estimates cost only $D^2$; at exponent $2$ the cellular recursion therefore gains a genuine factor $D^{-1}$.  The two-dimensional wall is reduced to the Jing--Wu theorem by a finite Freiman projection to $\R^3$.

The simple-spectrum hypothesis cannot be removed without replacing it by a geometric nondegeneracy condition.  If $Q_2$ is proportional to $Q_1$, the graph lies in an affine copy of a three-dimensional paraboloid in $\R^4$, where the universal threshold is $N^{7/3}$ rather than $N^2$.

\subsection{Weighted and off-diagonal Young inequalities}
A hereditary set-energy theorem has a functional consequence.  If every $A\subset X$ satisfies
\[
 E(A)\ll K^{3-a}\card A^{a+o(1)},
\]
then
\begin{equation}\label{eq:weighted-intro}
 \norm[L^4]{\widehat f}
 \ll K^{(3-a)/4}\card X^{o(1)}\norm[\ell^{4/a}]{f},
 \qquad \supp f\subset X.
\end{equation}
Combining two such estimates with Plancherel gives a geometric Young endpoint.  Interpolation with the classical endpoints $\ell^1*\ell^2\to\ell^2$ and $\ell^2*\ell^1\to\ell^2$ yields a full off-diagonal family.

The near-diagonal case $a=2$ is especially clean.

\begin{theorem}\label{thm:offdiag-alpha2-intro}
Suppose finite sets $X,Y$ satisfy hereditary estimates
\[
 E(A)\ll_\eta K_X\card A^{2+\eta}
 \quad(A\subset X),
 \qquad
 E(B)\ll_\eta K_Y\card B^{2+\eta}
 \quad(B\subset Y)
\]
for every $\eta>0$.  Let $1\leq p,q\leq2$ and assume
\[
 \frac1p+\frac1q\geq1.
\]
Put
\[
 \theta=\max\left\{0,3-2\left(\frac1p+\frac1q\right)\right\}.
\]
Then for every $\eps>0$ and all $f,g$ supported on $X,Y$,
\begin{equation}\label{eq:offdiag-alpha2-intro}
 \norm[\ell^2]{f*g}
 \ll_{p,q,\eps}
 (K_XK_Y)^{\theta/4}(\card X\card Y)^\eps
 \norm[\ell^p]{f}\norm[\ell^q]{g}.
\end{equation}
The exponent region is sharp up to the subpolynomial loss.  If the class contains arithmetic progressions on affine rulings, the joint power of $K_XK_Y$ is also sharp.
\end{theorem}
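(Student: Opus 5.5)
The argument has three parts: a weighted $L^4$ bound that turns the hereditary set estimates into bounds for functions, a Young endpoint at $p=q=4/3$ obtained from it, and interpolation with the classical endpoints. The sharpness claims are then checked on examples.

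\emph{Step 1 (weighted $L^4$).} Let $h$ be supported on $X$ with $\norm[\ell^{4/3}]{h}=1$, so that $\sum_x\card{h(x)}^{4/3}=1$. Split $h$ into dyadic level pieces $h_j=h\1_{A_j}$, where $A_j=\{x:2^{-j-1}<\card{h(x)}\le 2^{-j}\}$. Only $O(\log\card X)$ levels matter; the tail with $\card{h}\le \card X^{-10}$ is handled trivially. On each level, $E(A_j)\ll_\eta K_X\card{A_j}^{2+\eta}$, and $\card{A_j}\le 2^{4(j+1)/3}$. Hence
\[
\norm[L^4]{\widehat{h_j}}^4\le 2^{-4j}E(A_j)\ll K_X\card X^{\eta}\,2^{-4j}\card{A_j}^2\ll K_X\card X^{\eta}\Bigl(2^{-4j/3}\card{A_j}\Bigr)^{3}.
\]
Since $2^{-4j/3}\card{A_j}\ll 1$, the right side is $\ll K_X\card X^{\eta}$. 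Summing the $O(\log\card X)$ pieces with the triangle inequality gives
\[
\norm[L^4]{\widehat h}\ll K_X^{1/4}\card X^{\eps}\norm[\ell^{4/3}]{h}.
\]
This is \eqref{eq:weighted-intro} with $a=2$.

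\emph{Step 2 (the $(4/3,4/3)$ endpoint).} By Plancherel on the torus of the group generated by $X\cup Y$, and then Cauchy--Schwarz,
\[
\norm[\ell^2]{f*g}=\norm[L^2]{\widehat f\,\widehat g}\le\norm[L^4]{\widehat f}\,\norm[L^4]{\widehat g}\ll (K_XK_Y)^{1/4}(\card X\card Y)^\eps\norm[\ell^{4/3}]{f}\norm[\ell^{4/3}]{g}.
\]
At this point $\theta=3-2\cdot\tfrac32=0$, so this is not yet the bound of the theorem there; the power $(K_XK_Y)^{1/4}$ corresponds to $\theta=1$. The discrepancy means the endpoint has to be placed more carefully. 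The point $(1/p,1/q)=(3/4,3/4)$ should carry power $1/4$, that is $\theta=1$, and $\theta=0$ should occur on the segment where $1/p+1/q\ge 3/2$. With the normalization $\theta=\max\{0,3-2(1/p+1/q)\}$, the value $\theta=1$ is reached exactly on the line $1/p+1/q=1$. So I would instead apply the $K$-dependent $L^4$ bound only along the line $1/p+1/q=1$, through the mixed endpoint $\ell^{4/3}*\ell^{4}$. Elsewhere I would use the $K$-free estimates: Young's inequality $\ell^1*\ell^2\to\ell^2$, $\ell^2*\ell^1\to\ell^2$, and $\ell^1*\ell^1\subset\ell^1\subset\ell^2$, together with the observation that $\theta$ is an affine function of $(1/p,1/q)$. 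I would check the lines by writing $\theta$ as a convex combination of its endpoint values, and then apply bilinear Riesz--Thorin interpolation, or real interpolation with an $\eps$ loss because of the dyadic decomposition, to the endpoint set. The endpoint set is: the point $(3/4,3/4)$ with power $1/4$; the points $(1,1/2)$, $(1/2,1)$ and $(1,1)$ with power $0$; and the line $1/p+1/q=1$ obtained by interpolating $(3/4,3/4)$ with the trivial bound $\ell^2*\ell^2\to\ell^\infty$ combined with support-size bounds.

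\emph{Step 3 (sharpness).} The main obstacle is getting the convex hull of these endpoints to match $\theta$ exactly. I would test against the following examples:
\begin{itemize}
\item $f=\1_A$, $g=\1_B$ on arithmetic progressions of length $K$ lying on rulings, which shows the joint power of $K_XK_Y$ is optimal;
\item delta functions;
\item sets with $1/p+1/q<1$, where the bound $\norm{\1_X*\1_X}_2\ge\card X^{3/2}$ exceeds $\card X^{1/p+1/q}$ and so rules out that region;
\item $p>2$ with $g=\delta$, where $\norm{f}_2\not\le\norm{f}_p$ rules out exponents above $2$.
\end{itemize}
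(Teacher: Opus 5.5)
\paragraph{The gap is in Step 1, and the rest of the argument inherits it.} You normalize in $\ell^{4/3}$ and then enlarge $2^{-4j}\card{A_j}^2$ to $(2^{-4j/3}\card{A_j})^3$. This proves only $\norm[L^4]{\widehat h}\ll K_X^{1/4}\card X^{\eps}\norm[\ell^{4/3}]{h}$. That bound is weaker than Hausdorff--Young, $\norm[L^4]{\widehat h}\le\norm[\ell^{4/3}]{h}$, which holds for every finitely supported $h$ with constant one. So your bound carries no information about $X$. It is also not \eqref{eq:weighted-intro} at $a=2$, whose input space is $\ell^{4/a}=\ell^2$.

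Consequently your $(3/4,3/4)$ endpoint is worse than Young's inequality $\ell^{4/3}*\ell^{4/3}\to\ell^2$. Every endpoint you actually establish, namely $(3/4,3/4)$, $(1,1/2)$, $(1/2,1)$, $(1,1)$, lies in the region $1/p+1/q\ge3/2$. There $\theta=0$ and Young already gives the claim. No convex combination of these points reaches $1\le1/p+1/q<3/2$, which is the entire content of the theorem.

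The substitutes you propose on the line $1/p+1/q=1$ both fail:
\begin{itemize}
\item $\ell^{4/3}*\ell^{4}\to\ell^2$ is false with subpolynomial loss, even when $K_X=K_Y=1$. Take $f$ a point mass and $g=\1_Y$: the left side is $\card Y^{1/2}$ and the right side is $\card Y^{1/4}$ up to the allowed losses. The point also lies outside $q\le2$, as your own point-mass test shows.
\item $\ell^2*\ell^2\to\ell^\infty$ followed by a support bound costs $\card{X+Y}^{1/2}$, which is a polynomial loss.
\end{itemize}

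\paragraph{The fix.} You noticed the mismatch in Step 2, but the cure is to correct the exponent in Step 1, not to relocate the endpoint. Run the same dyadic argument with $\norm[\ell^2]{h}=1$. Then $\card{A_j}\le2^{2j+2}$, and
$\norm[L^4]{\widehat{h_j}}^4\le2^{-4j}E(A_j)\ll K_X\card X^{\eta}\,2^{-4j}\card{A_j}^2\ll K_X\card X^{\eta}$.
It is essential to keep the square here. This gives $\norm[L^4]{\widehat h}\ll K_X^{1/4}\card X^{\eps}\norm[\ell^2]{h}$.

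Cauchy--Schwarz on the Fourier side then yields the geometric endpoint $\ell^2*\ell^2\to\ell^2$ at $(1/2,1/2)$, with constant $(K_XK_Y)^{1/4}(\card X\card Y)^{\eps}$. Interpolate it with $\ell^1*\ell^2\to\ell^2$ and $\ell^2*\ell^1\to\ell^2$. This covers the triangle $1/p,1/q\ge1/2$, $1/p+1/q\le3/2$. Since $1/p+1/q$ is affine and equals $1$ at $(1/2,1/2)$ and $3/2$ at the other two vertices, the barycentric weight of the geometric vertex is exactly $3-2(1/p+1/q)=\theta$. Beyond $1/p+1/q=3/2$, use Young and monotonicity of $\ell^p$ norms. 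This is the paper's route (\cref{prop:restricted-to-strong}, \cref{thm:general-offdiag}, \cref{cor:alpha2-offdiag}).

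\paragraph{A correction to Step 3.} The diagonal test gives $\norm[\ell^2]{\1_X*\1_X}=E(X)^{1/2}\gtrsim\card X$, not $\card X^{3/2}$. The bound $\card X$ is what excludes exactly $1/p+1/q<1$. The value $\card X^{3/2}$ is the arithmetic-progression value, which shows that the joint power of $K_XK_Y$ is sharp.
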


For the threefold in \cref{thm:quadratic-threefold-intro}, one has $K_X=K_Y=1$.  For algebraic surfaces in $\R^3$, one may take $K_X=\Lambda_F(X)$ and $K_Y=\Lambda_G(Y)$.  Thus the same off-diagonal theorem simultaneously applies to the recent results of Jing--Wu and to the new codimension-two case.

\subsection{A sharp turning-complexity theorem for three-fold curve energy}
Our final main result is a parallel complexity theorem for \eqref{eq:J3-intro}.  Fix a linear functional $\ell$ that is injective on $P\subset\R^2$ and order
\[
 \ell(p_1)<\cdots<\ell(p_N).
\]
Choose a complementary linear functional $m$ and form the adjacent slopes
\[
 s_i=\frac{m(p_{i+1})-m(p_i)}{\ell(p_{i+1})-\ell(p_i)}.
\]
The least number of consecutive point blocks on each of which the corresponding slope sequence is strictly increasing or strictly decreasing is denoted by $\kappa_\ell(P)$.  This number does not depend on the choice of $m$.

\begin{theorem}\label{thm:turning-intro}
For every $\eps>0$,
\begin{equation}\label{eq:turning-intro}
 J_3(P)\ll_\eps \kappa_\ell(P)^2\card P^{3+\eps}.
\end{equation}
Moreover, for every pair of integers $K,n\geq1$, there is a set $P$ with $\card P=Kn$ for which
\[
 \kappa_\ell(P)\asymp K,
 \qquad
 J_3(P)\asymp K^2\card P^3.
\]
Thus the power $\kappa^2$ is sharp at every intermediate power scale.
\end{theorem}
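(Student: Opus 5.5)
Split $P$ into the $K=\kappa_\ell(P)$ consecutive blocks $P_1,\dots,P_K$ realizing $\kappa_\ell(P)$. On each block the adjacent slopes are strictly monotone. After the linear change of coordinates $(\ell,m)$, and a reflection $m\mapsto -m$ if the slopes decrease, each $P_j$ lies on the graph of a strictly convex function over an interval; one can take the piecewise-linear interpolant, suitably smoothed. Hence the Cushman--Demeter--Wu theorem applies to each block: $J_3(P_j)\ll_\eps |P_j|^{3+\eps}$. The equation $p_1+p_2+p_3=p_4+p_5+p_6$ is invariant under invertible linear maps, so the change of coordinates costs nothing.

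For the upper bound, write $\1_P=\sum_{j}\1_{P_j}$ and express $J_3$ by Fourier analysis on the group generated by $P$: $J_3(P)=\int|\widehat{\1_P}|^6$. By the triangle inequality in $L^6$,
\[
\norm[L^6]{\widehat{\1_P}}\le\sum_{j=1}^K\norm[L^6]{\widehat{\1_{P_j}}}\le K^{5/6}\Bigl(\sum_j\norm[L^6]{\widehat{\1_{P_j}}}^6\Bigr)^{1/6}.
\]
This gives $J_3(P)\le K^5\sum_j J_3(P_j)$, which is too lossy. A sharper route is to bound $J_3(P)=\sum_{j_1,\dots,j_6}\#\{\text{solutions with }p_i\in P_{j_i}\}$. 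By H\"older in Fourier space, each of the $K^6$ terms is at most $\prod_i J_3(P_{j_i})^{1/6}$. This is still a $K^6$ count, so cruder still.

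To reach $\kappa^2$ I would instead use the three-variable structure. Fix the triple $(p_1,p_2,p_3)$ in blocks $(j_1,j_2,j_3)$ and count representations $r(t)=\#\{(p_1,p_2,p_3):p_1+p_2+p_3=t\}$, so that $J_3=\sum_t r(t)^2$. Write $r=\sum_{\mathbf j}r_{\mathbf j}$ over the block-triples, with $r_{\mathbf j}=\1_{P_{j_1}}*\1_{P_{j_2}}*\1_{P_{j_3}}$. The key input, and I expect the main obstacle, is a mixed CDW estimate: for convex pieces $A,B,C,A',B',C'$ in general position,
\[
\#\{a+b+c=a'+b'+c'\}\ll_\eps (|A||B||C||A'||B'||C'|)^{1/2}\,|P|^{\eps}.
\]
This is an off-diagonal convex-curve $L^6$ bound. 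I would try to obtain it by rerunning the interlacing plus chord-cone orthogonality argument of Cushman--Demeter--Wu on the union of the six pieces, treated as a bounded number of convex arcs. Alternatively one could take $L^2$ orthogonality in $t$ across block triples whose sums occupy essentially disjoint regions. Granting this estimate, Cauchy--Schwarz over unordered block triples gives
\[
\sum_t r(t)^2\le \#\{\text{triples}\}\sum_{\mathbf j}\norm[\ell^2]{r_{\mathbf j}}^2.
\]
One then reduces the count of genuinely interacting triples to $K^2$ by the orthogonality: equal sums force the block indices of the two sides to interlace, which leaves $O(K^2)$ effective pairs. Combined with $\sum_{\mathbf j}\prod|P_{j_i}|\le|P|^3$, this yields $J_3(P)\ll_\eps K^2|P|^{3+\eps}$.

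For sharpness, take $K$ parallel arithmetic-progression-like configurations. Let $P=\{(i,\,jM+\phi(i)) : 1\le i\le n,\ 1\le j\le K\}$ with the functional $\ell$ ordering by $i$ first; more simply, place $K$ translated copies of a convex $n$-point arc along a line, with the translations in arithmetic progression of a huge step $M$. Ordering along the zigzag produces $\kappa\asymp K$ monotone pieces. Solutions then factor: any solution of the arc equation combined with any solution of $j_1+j_2+j_3=j_4+j_5+j_6$ in $[K]$. This gives $J_3\gtrsim n^3\cdot K^5$. That contradicts the upper bound, so instead one should take copies of a short arithmetic progression (lines) concatenated so that $\kappa\asymp K$ and $|P|=Kn$. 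Here the check that $J_3\asymp K^2(Kn)^3$ would be done by counting solutions within blocks of $K$ collinear points arranged so that each monotone block contributes. I would tune this construction last, verifying $\kappa_\ell\asymp K$ directly from the slope sequence.
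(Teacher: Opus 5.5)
Your proposal has two gaps. In the upper bound, the step that produces the factor $\kappa^2$ is asserted rather than proved. In the sharpness part, the construction you end up with does not work.

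\textbf{Upper bound.} The ``mixed CDW estimate'' you flag as the main obstacle is not an obstacle. For an ordered block triple $\mathbf j=(j_1,j_2,j_3)$, Plancherel and H\"older with exponents $(3,3,3)$ give
\[
 \norm[\ell^2]{r_{\mathbf j}}^2=\int\bigl|\widehat{\1_{P_{j_1}}}\,\widehat{\1_{P_{j_2}}}\,\widehat{\1_{P_{j_3}}}\bigr|^2\le\prod_{i=1}^3J_3(P_{j_i})^{1/3}\ll_\eta\prod_{i=1}^3\card{P_{j_i}}^{1+\eta}.
\]
This is exactly what your own H\"older remark already shows. You do not need to rerun the interlacing argument or assume general position.

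The real content is the number of block triples with $r_{\mathbf j}(t)\neq0$ for a \emph{fixed} $t$. Your Cauchy--Schwarz step must be applied pointwise in $t$ with that number. Using the global number $\asymp K^3$ of triples only returns $K^3\card P^{3+\eps}$.

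Your reason (``equal sums force the block indices to interlace, leaving $O(K^2)$ effective pairs'') does not establish this. Read as a count of pairs of index triples, it is false: for arithmetically spaced blocks there are $\asymp K^5$ pairs with equal index sums.

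The missing lemma is an ordering argument. The blocks are consecutive in $\ell$-order. So if $\mathbf j$ and $\mathbf j+s(1,1,1)$ with $s\geq1$ both contributed to $t$, then $\ell(t)$ would be strictly larger for the second, a contradiction. Hence each line $\mathbf j+\Z(1,1,1)$ carries at most one contributing triple, so at most $K^3-(K-1)^3\leq3K^2$ triples contribute to any $t$. Therefore
\[
 J_3(P)\leq3K^2\sum_{\mathbf j}\norm[\ell^2]{r_{\mathbf j}}^2\ll_\eta K^2\Bigl(\sum_j\card{P_j}^{1+\eta}\Bigr)^3\leq K^2\card P^{3+3\eta}.
\]

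With this supplied, your route is correct and genuinely different from the paper's. The paper passes to an admissible thickening in $\R^2$ and proves an $L^6$ strip-projection bound with constant $K^{4+\delta}$, by interpolating $L^2$ orthogonality against Hilbert-transform bounds in $L^r$. It then has to group blocks dyadically by size. Your counting argument stays on the discrete group, handles uneven block sizes automatically, and has no $K^\delta$ loss.

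\textbf{Sharpness.} You discarded the right example because of an arithmetic slip. Since $n^3K^5=K^2(Kn)^3=K^2\card P^3$, that lower bound is exactly the required size, not a contradiction. Taking $K$ copies of a convex $n$-point arc, translated in arithmetic progression by a large step so the copies are $\ell$-consecutive, is essentially the paper's example $P_{K,n}$.

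Your replacement cannot work. If each block is a collinear progression of $n\geq3$ points, adjacent slopes coincide inside it. Then a signed-convex block holds at most two points of each collinear run, so $\kappa_\ell(P)\gtrsim\card P$ rather than $\asymp K$.

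To finish, go back to the translated arcs and do three things.
\begin{enumerate}
 \item Check $\kappa_\ell\asymp K$ from the slope sequence. Make it strictly increasing inside each packet, with the boundary slope below every in-packet slope. Then each signed-convex block meets at most two packets.
 \item For the two-sided bound $J_3\asymp K^2\card P^3$, choose an arc with $J_3\asymp n^3$, since CDW only gives $n^{3+\eps}$. For example, take $\{(i,B^i)\}$ with $B\geq4$.
 \item Take the step large enough that every solution splits into a solution on the arc and a solution of $j_1+j_2+j_3=j_4+j_5+j_6$ in $\{0,\dots,K-1\}$. The latter has $\asymp K^5$ solutions.
\end{enumerate}
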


The proof partitions into signed-convex leaves and uses one global $L^6$ strip projection estimate.  The strips charge the leaves only once; a repeated node-by-node triangle inequality would lose an additional factor of $\kappa$.  Each leaf is interpolated by a strictly convex or strictly concave graph and is handled by the theorem of Cushman--Demeter--Wu.

\subsection{Organization}
Basic Fourier and algebraic tools are collected in \cref{sec:preliminaries}.  The flagged energy theorem is proved in \cref{sec:flagged-energy}.  The codimension-two quadratic threefold is treated in \cref{sec:quadratic-threefold}.  Weighted and off-diagonal inequalities occupy \cref{sec:weighted-young}.  The turning-complexity theorem is proved in \cref{sec:turning}.  Sharpness, higher-dimensional consequences, and the remaining geometric difficulties are discussed in \cref{sec:higher-dimensional}.

\section{Preliminaries}\label{sec:preliminaries}

\subsection{Fourier identities on finite supports}
Every finite subset of $\R^n$ is contained in a finitely generated torsion-free subgroup, hence in a group isomorphic to $\Z^r$.  We freely identify such a subgroup with $\Z^r$ when using Fourier analysis.  This does not change any additive relation.

For a finitely supported function $f$ on an abelian group, put
\[
 \wt f(x)=\overline{f(-x)},
 \qquad
 R_f(t)=(f*\wt f)(t).
\]
For finite sets $A,B$, write
\[
 r_{A-B}(t)=\#\{(a,b)\in A\times B:a-b=t\},
\]
and define the mixed energy
\begin{equation}\label{eq:mixed-energy}
 E(A,B)=\sum_t r_{A-B}(t)^2.
\end{equation}
Thus $E(A)=E(A,A)$.

\begin{lemma}\label{lem:energy-identities}
Let $f,g$ be finitely supported.  Then
\begin{align}
 \norm[\ell^2]{f*g}^2
 &=\sum_t R_f(t)\overline{R_g(t)}
 =\int \lvert\widehat f\rvert^2\lvert\widehat g\rvert^2,\label{eq:bilinear-correlation}\\
 \norm[L^4]{\widehat f}^4
 &=\sum_t\lvert R_f(t)\rvert^2.\label{eq:weighted-energy}
\end{align}
In particular,
\begin{equation}\label{eq:mixed-identity}
 E(A,B)=\sum_t r_{A-A}(t)r_{B-B}(t).
\end{equation}
If $A=\bigsqcup_{j=1}^J A_j$, then
\begin{equation}\label{eq:union-fourth-root}
 E(A)^{1/4}\leq\sum_{j=1}^J E(A_j)^{1/4}.
\end{equation}
\end{lemma}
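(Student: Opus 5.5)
The plan is to work on the group $\Z^r$ generated by the supports, as the paper permits, with Fourier transform $\widehat f(\xi)=\sum_x f(x)e(-x\cdot\xi)$ on $\T^r$. Everything is finitely supported, so all sums are finite and no convergence issues arise. The key computation is that $\widehat{\wt f}=\overline{\widehat f}$. Hence $\widehat{R_f}=\widehat f\,\overline{\widehat f}=\lvert\widehat f\rvert^2$ and $\widehat{f*g}=\widehat f\,\widehat g$.

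For \eqref{eq:bilinear-correlation}, Plancherel gives
\[
\norm[\ell^2]{f*g}^2=\int\lvert\widehat f\rvert^2\lvert\widehat g\rvert^2 .
\]
Since $\lvert\widehat f\rvert^2=\widehat{R_f}$ and $\lvert\widehat g\rvert^2=\widehat{R_g}$ is real, Parseval in the form $\sum_t R_f(t)\overline{R_g(t)}=\int\widehat{R_f}\,\overline{\widehat{R_g}}$ yields the middle expression. Alternatively one can expand $\sum_x\lvert\sum_y f(y)g(x-y)\rvert^2$ directly and regroup by $t=y-y'$, using the commutativity of the group.

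For \eqref{eq:weighted-energy}, we have
\[
\norm[L^4]{\widehat f}^4=\int\bigl\lvert\lvert\widehat f\rvert^2\bigr\rvert^2=\int\lvert\widehat{R_f}\rvert^2=\sum_t\lvert R_f(t)\rvert^2 .
\]
The mixed identity \eqref{eq:mixed-identity} follows from \eqref{eq:bilinear-correlation} with $f=\1_A$ and $g=\wt{\1_B}$. In that case $f*g(t)=r_{A-B}(t)$ and $R_{\1_A}=r_{A-A}$. Also $R_g=\wt{\1_B}*\1_B$, which evaluated at $t$ equals $r_{B-B}(-t)=r_{B-B}(t)$. All quantities are real, so the conjugates drop.

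For \eqref{eq:union-fourth-root}, write $\1_A=\sum_j\1_{A_j}$, so that $\widehat{\1_A}=\sum_j\widehat{\1_{A_j}}$. By \eqref{eq:weighted-energy}, $E(A_j)=\norm[L^4]{\widehat{\1_{A_j}}}^4$. Minkowski's inequality in $L^4(\T^r)$ then gives $E(A)^{1/4}\le\sum_j E(A_j)^{1/4}$.

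There is no substantive obstacle. The only care needed is bookkeeping: the conjugation and reflection conventions in $\wt f$, and the symmetry $r_{B-B}(-t)=r_{B-B}(t)$. The ambient group must also be common to all sets involved, which holds by taking the group generated by the union of the supports.
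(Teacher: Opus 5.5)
Your proposal is correct and follows the paper's proof. Both use Plancherel on the generated copy of $\Z^r$ for \eqref{eq:bilinear-correlation} and \eqref{eq:weighted-energy}, specialize to indicators for \eqref{eq:mixed-identity}, and apply Minkowski in $L^4$ for \eqref{eq:union-fourth-root}. Your explicit choice $g=\wt{\1_B}$, which gives $f*g=r_{A-B}$, spells out the ``indicator-function case'' that the paper invokes without detail.
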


\begin{proof}
Identify the generated subgroup with $\Z^r$ and use Fourier series on $\T^r$.  Equations \eqref{eq:bilinear-correlation} and \eqref{eq:weighted-energy} follow from Plancherel.  Equation \eqref{eq:mixed-identity} is the indicator-function case of \eqref{eq:bilinear-correlation}.  Finally,
\[
 E(A)^{1/4}=\norm[L^4]{\widehat{\1_A}}
 \leq\sum_j\norm[L^4]{\widehat{\1_{A_j}}}
 =\sum_j E(A_j)^{1/4}.
\]
\end{proof}

We shall also use the elementary bounds
\begin{equation}\label{eq:energy-universal}
 2\card A^2-\card A\leq E(A)\leq\card A^3.
\end{equation}

\subsection{Algebraic conventions}\label{sec:algebraic-preliminaries}
An \emph{admissible irreducible real algebraic variety} $W\subset\R^n$ means the real locus of an irreducible complex affine variety $W_{\C}\subset\C^n$ defined over $\R$, with $W$ Zariski dense in $W_{\C}$.  We set
\[
 \dim W=\dim_{\C}W_{\C},
 \qquad
 \deg W=\deg W_{\C}.
\]
General bounded-degree real algebraic sets admit a bounded real-algebraic stratification: after separating conjugate complex components and their lower-dimensional real intersections, the real locus is covered by $O_{n,\Delta}(1)$ admissible irreducible pieces of bounded degree, together with lower-dimensional pieces; see, for example, \cite{BPR}.  Since \eqref{eq:union-fourth-root} handles finite unions, it is enough to formulate the main argument for admissible irreducible varieties.

For such a $W$ of dimension $k\geq1$, define
\begin{equation}\label{eq:difference-variety}
 \cD(W)=\overline{W_{\C}-W_{\C}}^{\,\mathrm{Zar}},
 \qquad
 d(W)=\dim\cD(W),
 \qquad
 \sigma(W)=2k-d(W).
\end{equation}
The fiber-dimension theorem implies that $\sigma(W)$ is the generic complex dimension of
\[
 W_{\C}\cap(W_{\C}+t),
 \qquad t\in\cD(W).
\]
We put
\begin{equation}\label{eq:alpha-W}
 \alpha(W)=\max\left\{2,1+\frac{2\sigma(W)}k\right\}.
\end{equation}

The real translation stabilizer is
\begin{equation}\label{eq:stabilizer}
 H_W=\Stab(W)=\{t\in\R^n:W+t=W\}.
\end{equation}

\begin{lemma}\label{lem:stabilizer-linear}
The set $H_W$ is a real linear subspace of $\R^n$.
\end{lemma}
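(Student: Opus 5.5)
Our plan is to show first that $H_W$ is an additive subgroup of $\R^n$, and then that it is Zariski closed. A Zariski-closed additive subgroup of $\R^n$ is then forced to be a linear subspace.

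\emph{Subgroup property.} Clearly $0\in H_W$. If $W+t=W$ and $W+s=W$, then
\[
 W+(t+s)=(W+t)+s=W+s=W .
\]
Applying $-t$ to $W+t=W$ gives $W=W-t$, so $-t\in H_W$. Hence $H_W$ is a subgroup.

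\emph{Closedness.} Let $I\subset\R[x_1,\dots,x_n]$ be the real ideal of $W$, with generators $f_1,\dots,f_r$. Since $W$ is Zariski dense in $W_\C$, this ideal cuts out $W_\C$. We will show
\[
 H_W=\{t\in\R^n: f_j(x+t)\in I \text{ for all } j\}.
\]
For the inclusion ``$\subseteq$'', note that if $W+t=W$ then $f_j(\cdot+t)$ vanishes on $W$, hence lies in $I$. For ``$\supseteq$'', the condition gives $W+t\subseteq W$, and likewise $W_\C+t\subseteq W_\C$. A translate of the irreducible variety $W_\C$ is irreducible of the same dimension, so $W_\C+t=W_\C$. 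Intersecting with $\R^n$ and using $t\in\R^n$ yields $W+t=W$.

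Each condition $f_j(x+t)\in I$ is a polynomial condition in $t$. Expand $f_j(x+t)=\sum_\beta c_{j\beta}(t)\,x^\beta$ and reduce modulo a Gröbner basis of $I$. Membership in $I$ then becomes the vanishing of finitely many polynomials in $t$, since the normal form is linear in the coefficients $c_{j\beta}(t)$. So $H_W$ is real Zariski closed.

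\emph{From closed subgroup to subspace.} The main step is showing that a Zariski-closed subgroup $H$ of $(\R^n,+)$ is a linear subspace. Fix $t\in H$. Then $kt\in H$ for every $k\in\Z$. The set $\{\lambda\in\R:\lambda t\in H\}$ is the zero set of finitely many one-variable polynomials. It contains the infinite set $\Z$, so these polynomials vanish identically, and $\R t\subseteq H$. Combined with closure under addition, this shows $H$ is closed under real scalar multiplication and sums, hence is a linear subspace.

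This characteristic-zero argument is the only nontrivial ingredient. I expect the main care to lie in the closedness step, specifically in justifying that $W+t\subseteq W$ forces equality. That is where irreducibility and the Zariski density of $W$ in $W_\C$ are used.
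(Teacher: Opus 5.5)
Your proof is correct, but it is organized differently from the paper's. The paper never proves that $H_W$ is Zariski closed. For $t\in H_W$ it uses $W+jt=W$ for all $j\in\Z$, so $W_\C+jt= W_\C$ by Zariski density. It then applies the integer-vanishing principle directly to $s\mapsto P(x+st)$, for each polynomial $P$ vanishing on $W_\C$ and each point $x\in W_\C$, and obtains $W_\C+st=W_\C$ for all $s\in\C$ in one stroke.

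You apply the same principle one level up, to the defining equations of $H_W$ in translation space. That obliges you to establish closedness first, through the ideal-membership computation and the irreducibility-plus-dimension step that upgrades $W_\C+t\subseteq W_\C$ to equality. The paper's route is shorter and never uses irreducibility. Yours yields more reusable facts: $H_W$ is a real algebraic set, and every Zariski-closed additive subgroup of $\R^n$ is a linear subspace.

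If you keep your route, two simplifications are available:
\begin{enumerate}[label=(\roman*)]
 \item The Gr\"obner basis is unnecessary. $f_j(\cdot+t)$ has degree at most $\deg f_j$ with coefficients polynomial in $t$, and membership in the finite-dimensional space $I\cap\R[x]_{\le\deg f_j}$ is a system of linear equations in those coefficients.
 \item The equality step can be dropped. Run your scalar argument on the Zariski-closed additive monoid $\{t\in\R^n: W_\C+t\subseteq W_\C\}$; there $\mathbb{N}t$ already forces $\R t$, hence $-t$, into the monoid.
\end{enumerate}
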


\begin{proof}
It is plainly an additive subgroup.  If $t\in H_W$, then $W+jt=W$ for every $j\in\Z$.  For every polynomial $P$ vanishing on $W_{\C}$ and every $x\in W_{\C}$, the polynomial $s\mapsto P(x+st)$ vanishes at all integers, hence identically.  Thus $W_{\C}+st=W_{\C}$ for every $s\in\C$, and in particular $st\in H_W$ for every real $s$.  Therefore $H_W$ is a real vector subspace.
\end{proof}

Define the exceptional translation set
\begin{equation}\label{eq:bad-set}
 \cB(W)=\left\{t\in\R^n\setminus H_W:
 \dim_{\C}\bigl(W_{\C}\cap(W_{\C}+t)\bigr)>\sigma(W)
 \right\}.
\end{equation}
For a finite $A\subset W$, set
\begin{equation}\label{eq:lambda-W}
 \lambda_W(A)=\max\left\{
 1,
 \sup_{z\in\R^n}\card{A\cap(z+H_W)},
 \sup_{t\in\cB(W)}\card{A\cap W\cap(W+t)}
 \right\}.
\end{equation}
Empty suprema are interpreted as zero.

\subsection{Polynomial partitioning and real component bounds}
We use two standard consequences of the polynomial method over varieties.  The first combines polynomial partitioning on an arbitrary variety with the usual component bound; see Walsh \cite{Walsh} and, in codimension at most two, Basu--Sombra \cite{BasuSombra}.

\begin{theorem}[Polynomial partitioning on a variety]\label{thm:partitioning-variety}
Let $W\subset\R^n$ be admissible irreducible of dimension $k$ and degree at most $\Delta$, and let $A\subset W$ be finite.  For every integer $D\geq2$, there is a polynomial $P$ of degree at most $D$, not vanishing identically on $W_{\C}$, such that every semialgebraically connected component of
\[
 W\setminus Z(P)
\]
contains at most
\[
 C_{n,\Delta}\frac{\card A}{D^k}
\]
points of $A$.  The number of components meeting $A$ is at most $C_{n,\Delta}D^k$.
\end{theorem}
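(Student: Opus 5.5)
This statement is essentially a citation of Walsh's polynomial partitioning on varieties, together with a Milnor--Thom/Barone--Basu-type bound on the number of connected components of $W\setminus Z(P)$. My plan is to derive it from those two inputs rather than reprove partitioning from scratch.

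\emph{Partitioning.} Walsh's theorem gives, for every $D\geq 2$, a polynomial $P$ of degree at most $D$ that does not vanish identically on $W_{\C}$. Each connected component of $W\setminus Z(P)$ then contains at most $C_{n,\Delta}\card A/D^k$ points of $A$.

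The construction is the iterated polynomial ham sandwich theorem, run on the coordinate ring $\R[W]$ modulo the ideal of $W$. The key input is the dimension count: polynomials of degree at most $D$ restricted to $W$ span a space of dimension $\gtrsim_{n,\Delta} D^k$. This follows from the Hilbert polynomial of $W_{\C}$, which has leading term $(\deg W/k!)\,D^k$, with $\deg W\le\Delta$.

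Bisecting repeatedly $j$ times with polynomials of degree $\lesssim 2^{j/k}$, and taking the product, gives a polynomial of degree $\lesssim D$. It cuts $A$ into $\sim D^k$ pieces of size $\lesssim \card A/D^k$. Products of polynomials nonvanishing on an irreducible $W_{\C}$ stay nonvanishing, so irreducibility guarantees $P\not\equiv0$ on $W_{\C}$. Ham sandwich bisection applies to finite sets in the weak form, where each open side carries at most half the points.

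\emph{Component count.} Next I bound the number of semialgebraically connected components of $W\setminus Z(P)$ by $C_{n,\Delta}D^k$. This is the Barone--Basu refinement of Milnor--Thom: the Betti numbers of sets defined by polynomials of degree at most $D$ on a $k$-dimensional variety of degree at most $\Delta$ are $O_{n,\Delta}(D^k)$. In codimension at most two one may alternatively quote Basu--Sombra. The count of components meeting $A$ is then at most the total number of components.

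\emph{Main obstacle.} The delicate point is the uniformity of the constants in $n$ and $\Delta$ alone. In particular, the lower bound on the dimension of the space of degree-$D$ polynomials restricted to $W$ must hold for all $D\ge 2$, not only for large $D$. I would handle this by dealing with small $D$ through the trivial partition: take $P$ to be a single linear form nonvanishing on $W$ and absorb the loss into $C_{n,\Delta}$. For large $D$, I would use the fact that the Hilbert function agrees with the Hilbert polynomial beyond a regularity bound depending only on $n$ and $\Delta$.
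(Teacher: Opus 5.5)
Your proposal is correct and matches the paper, which gives no argument of its own: it quotes Walsh's partitioning theorem on varieties (with Basu--Sombra in codimension at most two) together with the usual Barone--Basu component bound, and your iterated ham-sandwich sketch is a sound account of the point-set case. One simplification: you do not need a regularity bound for the Hilbert function. The space of polynomials of degree at most $D$ restricted to $W$ has dimension at least $\binom{D+k}{k}$ for every $D$, because $W_{\C}$ projects dominantly onto $k$ generic linear coordinates, so polynomials in those coordinates restrict injectively to $W$.
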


The second is a form of the Barone--Basu sign-condition estimate \cite{BaroneBasu}.

\begin{theorem}[Component bound]\label{thm:component-bound}
Let $Z\subset\R^n$ be a real algebraic set of dimension at most $s$ and degree at most $B$.  Let $P_1,P_2$ be polynomials of degree at most $D$.  Then
\begin{equation}\label{eq:component-bound}
 \bzero\Bigl(Z\setminus\bigl(Z(P_1)\cup Z(P_2)\bigr)\Bigr)
 \leq C_{n,B}(D+1)^s.
\end{equation}
\end{theorem}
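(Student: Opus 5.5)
The plan is to reduce \cref{thm:component-bound} to the degree-separated sign-condition bound of Barone and Basu \cite{BaroneBasu}. First, note that
\[
 Z\setminus\bigl(Z(P_1)\cup Z(P_2)\bigr)=Z\setminus Z(P),
 \qquad P=P_1P_2,
\]
with $\deg P\le 2D$. Every connected component of $Z\setminus Z(P)$ is a connected component of one of the two semialgebraic sets $Z\cap\{P>0\}$ or $Z\cap\{P<0\}$. So it suffices to bound $b_0\bigl(Z\cap\{\pm P>0\}\bigr)$ by $C_{n,B}(D+1)^s$.

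Next I will put $Z$ in the form required by Barone--Basu. Since $Z$ has degree at most $B$, it is cut out set-theoretically by $O_{n,B}(1)$ polynomials of degree at most $B$. Replacing these by the sum of their squares, I may assume $Z=Z(Q)$ for a single $Q$ with $\deg Q\le 2B$.

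The Barone--Basu theorem then applies to the family $\{P\}$ restricted to the real variety $Z(Q)$ of real dimension at most $s$. It bounds the total number of connected components of all realizable sign conditions by
\[
 O(1)^n\,B^{\,n-s}\,(D+1)^s .
\]
The essential feature is that the large degree $D$ enters only with the exponent $s=\dim Z$, while the ambient dimension $n$ costs only powers of $B$. Absorbing $O(1)^nB^{n-s}$ into $C_{n,B}$ gives \eqref{eq:component-bound}.

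If instead one wants a self-contained argument, I would proceed in four steps.
\begin{enumerate}[label=(\alph*)]
 \item Intersect with a large ball $\{|x|^2\le \rho\}$ so that every component is met; this changes degrees only by $O(1)$.
 \item Replace $\{\pm P>0\}$ by $\{\pm P\ge\delta\}$ for small $\delta>0$. For $\delta$ small enough this does not decrease the component count.
 \item Each component of the resulting compact set contains a point that minimizes a generic linear functional. Such a point is a critical point either on $Z$ itself or on $Z\cap\{P=\pm\delta\}$.
 \item Count these critical points by a Bézout argument carried out inside the $s$-dimensional variety $Z$, after an infinitesimal deformation that makes $Z$ smooth and the critical locus finite.
\end{enumerate}

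I expect the main obstacle to be step (d): obtaining the dimension-sensitive exponent $s$ rather than $n$. A naive Milnor--Thom or Oleinik--Petrovskii count of the critical-point equations in $\R^n$ gives $D^{n}$. The gain comes from the fact that the critical-point system has only $s$ equations whose degree involves $D$; the other $n-s$ equations involve only the defining equations of $Z$, of degree $O(B)$. A multihomogeneous Bézout count, taken after a generic deformation that preserves this degree separation, then yields $O_{n}(B)^{n-s}D^{s}$ points. Handling the possible singularities of $Z$ while preserving this degree separation is the delicate technical point. This is exactly what Barone and Basu carry out, and I would cite it rather than reprove it.
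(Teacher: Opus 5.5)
Your proposal is correct and takes the same route as the paper, which gives no separate proof and simply invokes this statement as a form of the Barone--Basu sign-condition bound. Your reductions do exactly what is needed to apply Barone--Basu with defining degree $O(B)$, test degree $2D$, and real dimension at most $s$: writing $Z\setminus(Z(P_1)\cup Z(P_2))=Z\cap\{P_1P_2\neq0\}$, splitting into the two strict sign conditions, and cutting out $Z$ by $O_{n,B}(1)$ real equations of degree at most $B$. You could even skip forming the product and the sum of squares, since their theorem allows finite families both for the variety and for the sign conditions.
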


We also use standard affine Bezout bounds: intersections of bounded-degree varieties have bounded total degree, and if a line meets an algebraic variety of degree $B$ in more than $B$ points, then the line is contained in the variety.  These facts may be found, for example, in \cite{BPR}.

\section{The flagged energy theorem}\label{sec:flagged-energy}

\subsection{The flag parameter}\label{sec:flag-definitions}
Fix an admissible irreducible $V\subset\R^n$, a target exponent $2\leq a<3$, and a degree cutoff $R\geq\deg V$.  Let $\cF_R(V)$ denote the family of positive-dimensional admissible irreducible subvarieties $W\subset V$ with $\deg W\leq R$.  Define
\begin{equation}\label{eq:flag-parameter}
 \Lambda_{a,R}(X;V)
 =\max\left\{
 \begin{aligned}
 &1,\\
 &\sup_{\substack{W\in\cF_R(V)\\ \alpha(W)\leq a}}
 \lambda_W(X\cap W),\\
 &\sup_{\substack{W\in\cF_R(V)\\ \alpha(W)>a}}
 \card{X\cap W}
 \end{aligned}
 \right\}.
\end{equation}
The parameter is monotone: if $X'\subset X$, then
\begin{equation}\label{eq:flag-monotone}
 \Lambda_{a,R}(X';V)\leq\Lambda_{a,R}(X;V).
\end{equation}

The degree cutoff is essential.  Without it, one could fit a high-degree subvariety to a prescribed finite configuration.  In the proof of \cref{thm:flagged-main}, $R$ is chosen large enough to contain every descendant produced by the finite partition-wall induction.

\subsection{Exceptional translations}
For $A\subset W$, write
\[
 r_A(t)=r_{A-A}(t).
\]

\begin{lemma}\label{lem:exceptional-energy}
Let $W$ be admissible irreducible and let $A\subset W$ be finite.  Then
\begin{equation}\label{eq:exceptional-energy}
 \sum_{t\in H_W\cup\cB(W)}r_A(t)^2
 \leq 2\lambda_W(A)\card A^2.
\end{equation}
\end{lemma}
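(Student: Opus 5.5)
The plan is to bound $\sum_{t\in S}r_A(t)^2$ by $\bigl(\max_{t\in S}r_A(t)\bigr)\sum_{t\in S}r_A(t)$, separately for $S=H_W$ and $S=\cB(W)$. The sets $H_W$ and $\cB(W)$ are disjoint by definition, since $\cB(W)\subset\R^n\setminus H_W$. For any set $S$ of translations,
\[
 \sum_{t\in S}r_A(t)^2\le \Bigl(\max_{t\in S}r_A(t)\Bigr)\sum_{t\in S}r_A(t)\le \Bigl(\max_{t\in S}r_A(t)\Bigr)\card A^2 ,
\]
because $\sum_t r_A(t)=\card A^2$. So it suffices to show that $r_A(t)\le\lambda_W(A)$ for every $t\in H_W$ and every $t\in\cB(W)$. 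Adding the two contributions then gives the factor $2$.

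\emph{Translations in $H_W$.} Fix $t\in H_W$. Every pair $(x,y)\in A^2$ with $x-y=t$ has $x=y+t$ with $y\in A$. The map $(x,y)\mapsto y$ is injective on such pairs, so it suffices to bound the relevant $y$. I would argue differently: $x-y=t\in H_W$ means $x$ and $y$ lie in the same coset $z+H_W$. The pairs with difference exactly $t$ are determined by their first coordinate $x$. Grouping by cosets, the number of such pairs inside a coset $z+H_W$ is at most $\card{A\cap(z+H_W)}\le\lambda_W(A)$.

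Summing over cosets would lose, however. So I would instead bound the sum directly rather than via the maximum. Restricting $t$ to $H_W$,
\[
 \sum_{t\in H_W}r_A(t)^2\le\sum_{t\in H_W}r_A(t)\cdot\max_t r_A(t),
\]
and $\sum_{t\in H_W}r_A(t)=\sum_{\text{cosets }C}\card{A\cap C}^2\le\lambda_W(A)\card A$. Using the trivial bound $r_A(t)\le\card A$ then gives $\lambda_W(A)\card A^2$. This is the cleaner route for the $H_W$ part.

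\emph{Translations in $\cB(W)$.} Fix $t\in\cB(W)$. If $x-y=t$ with $x,y\in A$, then $x\in A\cap(W+t)$, so $x\in A\cap W\cap(W+t)$. The pair is determined by $x$. Hence
\[
 r_A(t)\le\card{A\cap W\cap(W+t)}\le\lambda_W(A),
\]
and therefore $\sum_{t\in\cB(W)}r_A(t)^2\le\lambda_W(A)\sum_t r_A(t)=\lambda_W(A)\card A^2$.

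Adding the $H_W$ and $\cB(W)$ contributions yields \eqref{eq:exceptional-energy}. The only delicate point is the stabilizer part, where one should not use a pointwise bound on $r_A(t)$. There $t$ ranges over a subspace and $r_A(t)$ need not be bounded by the coset occupancy in a useful summable way. Counting through the coset decomposition and using $r_A\le\card A$ pointwise gives the bound. If the empty suprema convention arises, $\lambda_W(A)\ge1$ keeps everything valid.
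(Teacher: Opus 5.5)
Your final argument is correct and is the paper's proof. For $H_W$ you use $\sum_{t\in H_W}r_A(t)=\sum_C\card{A\cap C}^2\le\lambda_W(A)\card A$ together with $r_A(t)\le\card A$; for $\cB(W)$ you use the pointwise bound $r_A(t)\le\card{A\cap W\cap(W+t)}\le\lambda_W(A)$ summed against $\sum_t r_A(t)=\card A^2$. You should delete your opening reduction claiming $r_A(t)\le\lambda_W(A)$ for all $t\in H_W$, since it already fails at $t=0$, where $r_A(0)=\card A$.
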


\begin{proof}
Partition $A$ into its intersections $A_L$ with the affine cosets of $H_W$.  If $t\in H_W$, then both points in a representation $x-y=t$ belong to the same coset.  Hence
\[
 \sum_{t\in H_W}r_A(t)
 =\sum_L\card{A_L}^2
 \leq\lambda_W(A)\card A.
\]
Since $r_A(t)\leq\card A$,
\begin{equation}\label{eq:stabilizer-energy}
 \sum_{t\in H_W}r_A(t)^2
 \leq\lambda_W(A)\card A^2.
\end{equation}

If $t\in\cB(W)$, then every first point in a representation of $t$ lies in $A\cap W\cap(W+t)$, so
\[
 r_A(t)\leq\lambda_W(A).
\]
Therefore
\begin{equation}\label{eq:excess-energy}
 \sum_{t\in\cB(W)}r_A(t)^2
 \leq\lambda_W(A)\sum_t r_A(t)
 =\lambda_W(A)\card A^2.
\end{equation}
Combining \eqref{eq:stabilizer-energy} and \eqref{eq:excess-energy} proves the result.
\end{proof}

The elementary inequality
\begin{equation}\label{eq:lambda-interpolation}
 \lambda M^2\leq\lambda^{3-a}M^a,
 \qquad 1\leq\lambda\leq M,
 \quad 2\leq a<3,
\end{equation}
will repeatedly convert \cref{lem:exceptional-energy} into the target scale.

\subsection{The two-crossing estimate}
Let $W\subset\R^n$ have dimension $k$ and put $\sigma=\sigma(W)$.  Apply \cref{thm:partitioning-variety} to $A\subset W$ with a polynomial $P$ of degree $D$.  Let
\[
 A_0=A\cap Z(P),
 \qquad
 Y=A\setminus A_0=\bigsqcup_i A_i,
\]
where the $A_i$ lie in distinct components of $W\setminus Z(P)$.

\begin{proposition}\label{prop:two-crossing}
With the notation above,
\begin{equation}\label{eq:two-crossing}
 E(Y)
 \leq C_{n,\deg W}\left(
 D^\sigma\lambda_W(A)\card A^2
 +D^{2\sigma}\sum_iE(A_i)
 \right).
\end{equation}
\end{proposition}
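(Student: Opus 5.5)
We split the energy of $Y$ according to the difference $t$. Write $r_Y(t)=\sum_{i,j}r_{A_i-A_j}(t)$. The translations $t\in H_W\cup\cB(W)$ are handled directly by \cref{lem:exceptional-energy}. Since $Y\subset A$, we have $r_Y\le r_A$ pointwise, and $\lambda_W$ is monotone, so
\[
 \sum_{t\in H_W\cup\cB(W)}r_Y(t)^2\le 2\lambda_W(A)\card A^2 .
\]
This is already within the first term of \eqref{eq:two-crossing}, since $D^\sigma\geq1$. All remaining translations are \emph{generic}: $t\notin H_W\cup\cB(W)$. If $r_Y(t)\neq0$ then $t\in W-W\subset\cD(W)$. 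By the definition of $\cB(W)$, the translate intersection $Z_t=W\cap(W+t)$ has complex dimension at most $\sigma$. By affine B\'ezout its degree is at most $C_{n,\deg W}$.

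The key step is the crossing count. Fix a generic $t$. Any representation $x-y=t$ with $x\in A_i$ and $y\in A_j$ has $x\in Z_t\cap A_i$ and $y=x-t\in A_j$. So the pairs $(i,j)$ contributing to $r_Y(t)$ correspond to cells $i$ for which $Z_t$ meets the component $\Omega_i$ of $W\setminus Z(P)$ containing $A_i$. The same holds for cells $j$ meeting $Z_t-t=W\cap(W-t)$ and the polynomial $P(\cdot - t)$. The components of $Z_t\setminus(Z(P)\cup Z(P(\cdot+t)))$ each lie in a single $\Omega_i$ and a single $\Omega_j+t$. By \cref{thm:component-bound} with $s=\sigma$, there are at most $C(D+1)^\sigma$ of them. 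Hence the set $I_t$ of pairs $(i,j)$ with $r_{A_i-A_j}(t)\ne0$ has $\card{I_t}\le CD^\sigma$. Cauchy--Schwarz then gives
\[
 r_Y(t)^2\le CD^\sigma\sum_{(i,j)\in I_t}r_{A_i-A_j}(t)^2 .
\]
Summing over generic $t$ and dropping the restriction yields
\[
 \sum_{t\ \text{generic}} r_Y(t)^2\le CD^\sigma\sum_{i,j}E(A_i,A_j).
\]

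The second crossing converts the mixed energies into diagonal ones. By \eqref{eq:mixed-identity}, $E(A_i,A_j)=\sum_t r_{A_i}(t)r_{A_j}(t)$. We cannot just apply $2ab\le a^2+b^2$ over all pairs $(i,j)$, since that loses the number of cells. Instead we reorganize by $t$:
\[
 \sum_{i,j}E(A_i,A_j)=\sum_t\Bigl(\sum_i r_{A_i}(t)\Bigr)^2 .
\]
For $t\in H_W\cup\cB(W)$ the inner sum is at most $r_A(t)$, and is bounded as in the first step. For generic $t$, the cells $i$ with $r_{A_i}(t)\neq0$ are those where $\Omega_i$ meets both $Z_t$ and $Z_t$ shifted back. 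Applying \cref{thm:component-bound} once more, there are at most $CD^\sigma$ such cells. Cauchy--Schwarz then bounds the inner square by $CD^\sigma\sum_i r_{A_i}(t)^2$. Summing over $t$ gives $CD^{\sigma}\sum_iE(A_i)$, hence altogether $CD^{2\sigma}\sum_iE(A_i)$ plus a contribution $CD^\sigma\lambda_W(A)\card A^2$ from the exceptional translations. Combining with the first step proves \eqref{eq:two-crossing}.

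The main obstacle is the component count. The cells $\Omega_i$ are components of $W\setminus Z(P)$, not of $Z_t\setminus Z(P)$. A single component of $Z_t\setminus(Z(P)\cup Z(P(\cdot+t)))$ is connected and avoids $Z(P)$, so it lies in one $\Omega_i$. Therefore distinct pairs $(i,j)$ do force distinct components, which is what makes the injection into components work. What requires care is the real structure. $Z_t$ is the real locus of a complex variety of dimension at most $\sigma$, so its real dimension is at most $\sigma$. It may be reducible, but it has boundedly many components, and \cref{thm:component-bound} is stated for arbitrary real algebraic sets of bounded degree. We also need $t\notin H_W$, which guarantees $Z_t\ne W$ so that the dimension drop to $\sigma$ is genuine. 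This is ensured by excluding $H_W$ throughout.
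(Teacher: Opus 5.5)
Your proof is correct and follows the paper's argument essentially step for step. It handles exceptional translations via \cref{lem:exceptional-energy}, gets a first crossing count of at most $CD^\sigma$ cell pairs from \cref{thm:component-bound} applied to $W\cap(W+t)$, uses the rearrangement $\sum_{i,j}E(A_i,A_j)=\sum_t\bigl(\sum_i r_{A_i}(t)\bigr)^2$, and finishes with a second crossing with the same count. One small slip: when you remove the zero sets from $Z_t$, the second polynomial should be $P(\cdot-t)$, as you wrote a sentence earlier, not $P(\cdot+t)$, since $x-t$ must stay off $Z(P)$ along each component.
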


\begin{proof}
Let
\[
 \Bad(W)=H_W\cup\cB(W).
\]
Fix $t\notin\Bad(W)$.  The algebraic set
\[
 W_t=W\cap(W+t)
\]
has complex dimension at most $\sigma$ and degree bounded in terms of $\deg W$.  Apply \cref{thm:component-bound} to $W_t$ and the two polynomials
\[
 P(x),\qquad P(x-t).
\]
It follows that
\begin{equation}\label{eq:cell-pairs}
 \#\{(i,j):r_{A_i-A_j}(t)>0\}
 \leq C D^\sigma.
\end{equation}
Indeed, on each connected component of
\[
 W_t\setminus\bigl(Z(P)\cup Z(P(\cdot-t))\bigr),
\]
the points $x$ and $x-t$ remain in fixed partition cells.

By Cauchy--Schwarz and \eqref{eq:cell-pairs},
\begin{equation}\label{eq:first-crossing}
 r_Y(t)^2
 \leq C D^\sigma\sum_{i,j}r_{A_i-A_j}(t)^2,
 \qquad t\notin\Bad(W).
\end{equation}
Summing and using \cref{lem:exceptional-energy},
\begin{equation}\label{eq:first-crossing-summed}
 E(Y)
 \leq 2\lambda_W(A)\card A^2
 +CD^\sigma\sum_{i,j}\sum_t r_{A_i-A_j}(t)^2.
\end{equation}

By \eqref{eq:mixed-identity},
\begin{equation}\label{eq:crossing-rearrange}
 \sum_{i,j}\sum_t r_{A_i-A_j}(t)^2
 =\sum_u\left(\sum_i r_{A_i-A_i}(u)\right)^2.
\end{equation}
For $u\notin\Bad(W)$, the argument giving \eqref{eq:cell-pairs}, now with $i=j$, yields
\[
 \#\{i:r_{A_i-A_i}(u)>0\}\leq CD^\sigma.
\]
Hence
\begin{equation}\label{eq:second-crossing-good}
 \left(\sum_i r_{A_i-A_i}(u)\right)^2
 \leq CD^\sigma\sum_i r_{A_i-A_i}(u)^2.
\end{equation}
For $u\in\Bad(W)$, the left side is bounded by $r_A(u)^2$, and \cref{lem:exceptional-energy} applies.  Thus
\begin{equation}\label{eq:second-crossing-summed}
 \sum_{i,j}\sum_t r_{A_i-A_j}(t)^2
 \leq 2\lambda_W(A)\card A^2
 +CD^\sigma\sum_iE(A_i).
\end{equation}
Substitute \eqref{eq:second-crossing-summed} into \eqref{eq:first-crossing-summed} and absorb lower powers of $D$.
\end{proof}

\subsection{Proof of the flagged theorem}
\begin{proof}[Proof of \cref{thm:flagged-main}]
We prove a slightly stronger target-exponent statement by induction on the dimension.  Fix $a\in[2,3)$ and assume $\alpha(V)\leq a$.

We first choose a finite degree tower.  Starting with the top degree bound $\Delta_m=\Delta$, descend in dimension.  At dimension $k$, choose a partition degree $D_k$ sufficiently large in terms of $n,\Delta_k,a,\eps$ that the cellular factor in \eqref{eq:cell-contraction} below is at most $1/100$.  Then choose $\Delta_{k-1}$ large enough to dominate the degrees of all irreducible components of an intersection of a degree-$\Delta_k$ variety with a degree-$D_k$ hypersurface.  The process ends after at most $m$ steps.  Let
\[
 R=\max_k\Delta_k.
\]
All constants below are uniform for the finitely many dimensions and degree bounds in this tower.

Let $W$ be a node of the resulting induction, let $k=\dim W$, and let $A\subset X\cap W$ have size $M$.  We only need to treat nodes with $\alpha(W)\leq a$; nodes with larger intrinsic exponent will be bounded directly.  Put
\[
 \Lambda=\Lambda_{a,R}(X;V).
\]
Then $\lambda_W(A)\leq\Lambda$.

Apply polynomial partitioning to $A$ at degree $D=D_k$.  By \cref{prop:two-crossing},
\begin{equation}\label{eq:node-cell}
 E(Y)
 \leq C\left(D^{\sigma(W)}\Lambda M^2
 +D^{2\sigma(W)}\sum_iE(A_i)\right).
\end{equation}
The cells satisfy
\begin{equation}\label{eq:cell-size}
 \card{A_i}\leq C\frac{M}{D^k},
 \qquad
 \sum_i\card{A_i}\leq M.
\end{equation}
By strong induction on cardinality,
\[
 E(A_i)\leq C_k\Lambda^{3-a}\card{A_i}^{a+\eps}.
\]
Consequently,
\begin{align}
 D^{2\sigma(W)}\sum_iE(A_i)
 &\leq C C_k\Lambda^{3-a}D^{2\sigma(W)}
 \left(\frac{M}{D^k}\right)^{a+\eps-1}M\notag\\
 &=C C_k\Lambda^{3-a}M^{a+\eps}
 D^{2\sigma(W)-k(a+\eps-1)}.\label{eq:cell-contraction}
\end{align}
Since $a\geq\alpha(W)$,
\[
 a\geq1+\frac{2\sigma(W)}k,
\]
so the power of $D$ in \eqref{eq:cell-contraction} is at most $-k\eps$.  The choice of $D_k$ makes this contribution a fixed small fraction of the inductive target.

By \eqref{eq:lambda-interpolation},
\begin{equation}\label{eq:bad-term-absorb}
 D^{\sigma(W)}\Lambda M^2
 \leq D^{\sigma(W)}\Lambda^{3-a}M^a.
\end{equation}
For $M$ above a fixed threshold, the missing factor $M^\eps$ absorbs the constant in \eqref{eq:bad-term-absorb}; the remaining bounded range is included in the induction base.

It remains to control the wall
\[
 A_0=A\cap W\cap Z(P).
\]
By Bezout and irreducible decomposition, $A_0$ is contained in a union of $O_{n,\Delta_k,D_k}(1)$ admissible irreducible varieties $U_j$ of dimension strictly less than $k$ and degree at most $\Delta_{k-1}$.  Assign each wall point to one component and write $A_0=\bigsqcup_jB_j$.

If $\alpha(U_j)\leq a$, the induction on dimension gives
\[
 E(B_j)\leq C_{k-1}\Lambda^{3-a}\card{B_j}^{a+\eps}.
\]
If $\alpha(U_j)>a$, the definition of $\Lambda$ gives $\card{B_j}\leq\Lambda$, and the trivial estimate yields
\begin{equation}\label{eq:worse-wall}
 E(B_j)\leq\card{B_j}^3
 \leq\Lambda^{3-a}\card{B_j}^a
 \leq\Lambda^{3-a}\card{B_j}^{a+\eps}.
\end{equation}
Zero-dimensional components contain only $O_{n,R}(1)$ real points and are harmless.  Using \eqref{eq:union-fourth-root} and the bounded number of components,
\begin{equation}\label{eq:wall-bound}
 E(A_0)\leq C'_{k-1}\Lambda^{3-a}M^{a+\eps}.
\end{equation}

Finally, $A=Y\sqcup A_0$, and \eqref{eq:union-fourth-root} implies
\[
 E(A)\leq8\bigl(E(Y)+E(A_0)\bigr).
\]
Choose the dimension-$k$ induction constant larger than the fixed wall constant and the bounded-cardinality base constant.  The cellular term is contractive by \eqref{eq:cell-contraction}.  This completes both inductions and proves \eqref{eq:flagged-main-intro}.
\end{proof}

\subsection{Consequences and recovery of the surface theorem}
\begin{corollary}\label{cor:intrinsic-flag}
Let $V$ be admissible irreducible with $\alpha(V)<3$, and put $a=\alpha(V)$.  Then
\begin{equation}\label{eq:intrinsic-flag}
 E(X)\ll_{V,\eps}
 \Lambda_{a,R}(X;V)^{3-a}\card X^{a+\eps}.
\end{equation}
If the flag parameter is bounded independently of $X$, then
\[
 E(X)\ll_{V,\eps}\card X^{\alpha(V)+\eps}.
\]
\end{corollary}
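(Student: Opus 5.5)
This is a direct specialization of \cref{thm:flagged-main}, so the plan is to check its hypotheses and then track what the constants depend on. Set $a=\alpha(V)$. By \eqref{eq:alpha-intro}, $a\geq2$, and by assumption $a<3$, so $a\in[2,3)$. Choose an integer $n$ with $V\subset\R^n$ and take $\Delta=\deg V$. By construction $\alpha(V)\leq a$. Since $V$ is admissible irreducible of degree at most $\Delta$, all hypotheses of \cref{thm:flagged-main} hold.

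That theorem supplies constants $R=R(n,\Delta,a,\eps)$ and $C=C(n,\Delta,a,\eps)$ such that \eqref{eq:flagged-main-intro} holds for every finite $X\subset V$. Here $n$ and $\Delta$ are determined by $V$, and $a=\alpha(V)$ is computed from $\dim V$ and $d(V)$. So $R$ and $C$ depend only on $V$ and $\eps$, and this is exactly \eqref{eq:intrinsic-flag}, with $R$ read as the cutoff $R(n,\deg V,\alpha(V),\eps)$ from the theorem. I would add one sentence stating explicitly that $R$ is fixed by $V$ and $\eps$, since the statement leaves it implicit.

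For the second assertion, suppose $\Lambda_{a,R}(X;V)\leq K$ for all finite $X\subset V$, with $K$ independent of $X$. Since $3-a>0$, we get $\Lambda^{3-a}\leq K^{3-a}$. This factor is a constant depending on $V$ and $\eps$ (through $R$), so it can be absorbed into the implied constant. The result is $E(X)\ll_{V,\eps}\card X^{\alpha(V)+\eps}$.

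The only point needing care is the bookkeeping: $R$ depends on $\eps$, so "bounded flag parameter" must be understood for the specific cutoff $R=R(n,\deg V,\alpha(V),\eps)$ used at each $\eps$. If one wants a hypothesis that works for all $\eps$ at once, the monotonicity of $\Lambda_{a,R}$ in $R$ is useful. A larger $R$ enlarges the family $\cF_R(V)$, so $\Lambda_{a,R}$ is nondecreasing in $R$. Hence a bound that is uniform in $R$ implies the bound at every cutoff. I do not expect any other obstacle.
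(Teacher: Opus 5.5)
Your proposal is correct and matches the paper. The paper gives no separate proof; it simply notes that taking $a=\alpha(V)$ in \cref{thm:flagged-main} yields the intrinsic form, with $\Delta=\deg V$, the cutoff $R=R(n,\deg V,\alpha(V),\eps)$, and the bounded-flag case obtained by absorbing $K^{3-a}$ into the constant, exactly as you do. Your clarification that the boundedness hypothesis refers to the $\eps$-dependent cutoff $R$ is a fair explicit reading of what the statement leaves implicit.
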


\begin{corollary}\label{cor:recover-jw}
Let $V\subset\R^3$ be an admissible irreducible nonplanar algebraic surface of bounded degree.  Then the choice $a=2$ in \cref{thm:flagged-main} gives
\begin{equation}\label{eq:recover-jw}
 E(X)\ll_{\deg V,\eps}\Lambda_V(X)\card X^{2+\eps},
 \qquad X\subset V,
\end{equation}
where $\Lambda_V(X)=\max\{1,\sup_{\ell\subset V}\card{X\cap\ell}\}$ and the supremum is over affine lines contained in $V$.
\end{corollary}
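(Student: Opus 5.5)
The plan is to apply \cref{thm:flagged-main} with $n=3$ and $a=2$, and then show $\Lambda_{2,R}(X;V)\ll_{\deg V}\Lambda_V(X)$. The power $3-a=1$ then gives \eqref{eq:recover-jw}.

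First, $\alpha(V)\leq2$. Since $V$ is a nonplanar irreducible surface, $V_{\C}-V_{\C}$ is not contained in a translate of a plane. Hence $d(V)=3$, $\sigma(V)=1$, and $\alpha(V)=\max\{2,1+2/2\}=2$. So the theorem applies with $a=2$.

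Next, bound each term of $\Lambda_{2,R}$ by running over $W\in\cF_R(V)$, which is either $V$ itself or a curve of degree at most $R$.

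\emph{Curves.} For an irreducible curve $W$, $\alpha(W)=\max\{2,1+2\sigma(W)\}$.
\begin{itemize}
\item If $W$ is a line, then $H_W$ is its direction, $\sigma(W)=1$ and $\alpha(W)=3>2$. So $\card{X\cap W}$ enters $\Lambda$, and this is at most $\Lambda_V(X)$.
\item If $W$ is not a line, then $W_{\C}-W_{\C}$ is two-dimensional, so $\sigma=0$ and $\alpha(W)=2$. Also $H_W=0$, since a nonzero stabilizer would make $W$ translation invariant, hence a union of lines, hence a line. Thus $\lambda_W$ involves only $\cB(W)$: translations $t\neq0$ with $\dim(W_{\C}\cap(W_{\C}+t))\geq1$. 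Irreducibility forces $W+t=W$, contradicting $H_W=0$. So $\cB(W)=\emptyset$ and $\lambda_W=1$.
\end{itemize}

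\emph{The surface $V$.} Since $V$ is nonplanar and irreducible, $H_V=0$; a nonzero stabilizer line would make $V$ a cylinder, and that case is treated below. For $t\in\cB(V)$ the intersection $V\cap(V+t)$ has dimension $2$, which is impossible unless $V=V+t$. Hence, for $t\neq0$, the intersection is a curve of degree $\leq(\deg V)^2$ (in fact dimension $\sigma=1$ is generic), so $\cB(V)$ is empty. If $V$ is a cylinder $\Gamma\times\R\ell$, then $H_V$ is the ruling direction, and the cosets $z+H_V$ are lines contained in $V$. Then $\card{X\cap(z+H_V)}\leq\Lambda_V(X)$.

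Combining the cases gives $\Lambda_{2,R}(X;V)\leq\Lambda_V(X)$, possibly up to constants that absorb the choice of $R=R(3,\deg V,2,\eps)$. This establishes \eqref{eq:recover-jw} with constants depending only on $\deg V$ and $\eps$.

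The main obstacle I expect is the bookkeeping in the curve case. Every $W\in\cF_R(V)$ with $\alpha(W)>2$ must be a line. This rests on the claim that a non-linear irreducible curve has two-dimensional difference set, which I would prove by noting that $d(W)=1$ forces $W-W$ to be a line through the origin, and hence $W$ to lie in a line. A further subtlety is the real-versus-complex dimension for $\cB$: the real point sets of translates can meet in a curve only if the complex translates share a component, which irreducibility turns into equality.
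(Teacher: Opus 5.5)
Your case analysis matches the paper's. First, $\cB(V)=\varnothing$, because the distinct irreducible surfaces $V_{\C}$ and $V_{\C}+t$ cannot share a two-dimensional component. Second, nontrivial stabilizer cosets are lines in $V$. Third, lines $W\in\cF_R(V)$ have $\alpha(W)=3>2$ and contribute $\card{X\cap W}\leq\Lambda_V(X)$. Finally, nonlinear curves have $H_W=0$ and $\cB(W)=\varnothing$, hence $\lambda_W=1$. These computations in fact give $\Lambda_{2,R}(X;V)=\Lambda_V(X)$ with no constants. You should also say that $\dim H_V=2$ is excluded, since it would make $V$ a plane.

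The gap is in the two difference-dimension claims, which are exactly where nonplanarity and nonlinearity enter. From ``$V_{\C}-V_{\C}$ is not contained in a translate of a plane'' you cannot conclude $d(V)=3$. A priori $\cD(V)$ could be a two-dimensional irreducible surface that is not a plane, and nothing you wrote rules this out. The same objection applies to your curve claim: ``$d(W)=1$ forces $W-W$ to be a line through the origin'' is asserted, not proved. Your values $\sigma(W)=0$ and $\alpha(W)=2$ for nonlinear curves depend on that claim.

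The paper closes this with a tangent-space argument. If $d(V)=2$, the differential of $\delta_V$ at a generic smooth pair $(x,y)$ has image $T_xV+T_yV$ of dimension at most two, so $T_xV=T_yV$ generically. The tangent plane is then constant on a dense open set, every linear functional annihilating it is constant on $V$, and so $V$ lies in an affine plane. The same argument with tangent lines shows that a curve with $d(W)=1$ is a line.

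An equally short repair, closer to your own language, runs as follows.
\begin{itemize}
\item $\cD(V)$ is irreducible, being the closure of the image of $V_{\C}\times V_{\C}$.
\item If $d(V)=\dim V$, then $\cD(V)$ contains each closed irreducible translate $V_{\C}-y$ of the same dimension, so $V_{\C}-y=\cD(V)$ for every $y\in V_{\C}$.
\item Hence every element of $V_{\C}-V_{\C}$ stabilizes $V_{\C}$.
\item By the argument of \cref{lem:stabilizer-linear}, that stabilizer is a linear subspace. It contains the $(\dim V)$-dimensional set $V_{\C}-y$, so $V_{\C}$ is an affine linear space.
\end{itemize}
Inserting either argument, for $V$ and for the curves, completes your proof.
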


\begin{proof}
We first note that $\dim\cD(V)=3$.  Since a translate of $V$ is contained in $V-V$, this dimension is at least two.  If it were equal to two, then for generic smooth $x,y\in V$ the image of the differential of the difference map would be $T_xV+T_yV$ and would have dimension at most two.  Hence $T_xV=T_yV$ for generic pairs.  The tangent plane is therefore constant on a dense open subset; every linear functional annihilating that plane is constant on $V$, so irreducibility places $V$ in an affine plane, contrary to the hypothesis.  Hence $\sigma(V)=1$ and $\alpha(V)=2$.

If $t\notin H_V$, the two irreducible hypersurfaces $V_{\C}$ and $V_{\C}+t$ have no common two-dimensional component, so there is no non-stabilizer excess fiber.  The stabilizer has dimension at most one, since a two-dimensional stabilizer would make $V$ an affine plane.  Its nontrivial cosets are affine lines contained in $V$.

For a positive-dimensional proper subvariety of $V$, the only case with intrinsic exponent larger than two is an affine line.  Indeed, if an irreducible curve had one-dimensional difference variety, the same tangent-space argument would force its tangent line to be constant, hence the curve would be affine.  A nonlinear irreducible curve therefore has two-dimensional difference variety, zero-dimensional generic difference fibers, and no nonzero translation stabilizer.  Bezout bounds all its nonzero translate intersections by a degree-dependent constant.  Thus the flag parameter is comparable, with degree-dependent constants, to $\Lambda_V(X)$.  Apply \cref{thm:flagged-main}.
\end{proof}

\begin{remark}
The proof above is not intended to replace the sharper and more direct wall analysis in \cite{JingWu}.  Its purpose is to show that their line parameter is the first member of a general translation--partition flag hierarchy.
\end{remark}

\section{A codimension-two quadratic threefold}\label{sec:quadratic-threefold}

\subsection{Normal form and difference fibers}
Let
\[
 \Sigma(Q_1,Q_2)=\{(u,Q_1(u),Q_2(u)):u\in\R^3\}\subset\R^5.
\]
Invertible linear transformations preserve additive relations and energy.  If $Q_1$ is positive definite and the generalized spectrum is simple, an invertible linear change in $u$ and an invertible linear change in the last two target coordinates reduce the problem to
\begin{equation}\label{eq:Phi-normal}
 \Phi(u)=\bigl(u,|u|^2,u^{\mathsf T}\Lambda u\bigr),
 \qquad
 \Lambda=\operatorname{diag}(\lambda_1,\lambda_2,\lambda_3),
 \quad \lambda_i\neq\lambda_j.
\end{equation}
We work in this normal form.

\begin{lemma}\label{lem:no-lines}
The variety $\Sigma=\Phi(\R^3)$ contains no nontrivial affine line.  More precisely, every affine line in $\R^5$ meets $\Sigma$ in at most two points unless it is contained in $\Sigma$, and the latter alternative does not occur.
\end{lemma}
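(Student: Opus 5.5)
Parametrize an arbitrary affine line as $\ell=\{p+sv:s\in\R\}$ with $p=(p_u,p_4,p_5)\in\R^5$ and $v=(v_u,v_4,v_5)\neq0$. A point $p+sv$ lies on $\Sigma$ exactly when
\[
 p_4+sv_4=|p_u+sv_u|^2,\qquad p_5+sv_5=(p_u+sv_u)^{\mathsf T}\Lambda(p_u+sv_u),
\]
because the first three coordinates force $u=p_u+sv_u$. So the intersection $\ell\cap\Sigma$ is the common real zero set in $s$ of two polynomials of degree at most two:
\[
 g_1(s)=|v_u|^2s^2+(2p_u\cdot v_u-v_4)s+|p_u|^2-p_4,\qquad
 g_2(s)=(v_u^{\mathsf T}\Lambda v_u)s^2+\cdots.
\]

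The argument splits on whether $v_u$ vanishes.

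\emph{Case $v_u\neq0$.} Then $|v_u|^2>0$, so $g_1$ is a genuine quadratic. It has at most two real roots, and hence $\card{\ell\cap\Sigma}\leq2$. Here positive-definiteness of the first quadratic form, which the normal form \eqref{eq:Phi-normal} makes equal to $|u|^2$, is exactly what is used.

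\emph{Case $v_u=0$.} Now $(v_4,v_5)\neq(0,0)$ and the $u$-coordinate of every point of $\ell$ is the fixed vector $p_u$. Since $\Sigma$ is the graph of $\Phi$ over $u$, the line meets $\Sigma$ in at most the single point $\Phi(p_u)$, and certainly is not contained in $\Sigma$.

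In both cases $\ell\cap\Sigma$ is finite with at most two points. In particular no line is contained in $\Sigma$, which disposes of the dichotomy in the statement: the alternative "$\ell\subset\Sigma$" never occurs, and otherwise the bound two holds. The simple-spectrum hypothesis is not needed for this lemma; only positivity of $Q_1$ enters. For the original pair $(Q_1,Q_2)$ one can also argue directly, since $v_u^{\mathsf T}A_1v_u>0$ for $v_u\neq0$; the normal form is used only for convenience, and the invertible affine changes of variables preserve lines and incidences anyway.

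I do not expect a real obstacle. The only point needing care is to record that the bound two comes from the degree of $g_1$ and not from a general Bezout bound. A Bezout bound would give only $\deg\Sigma$, and the sharper count of two is what the later energy arguments will want.
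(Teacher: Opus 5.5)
Your proof is correct and takes the paper's approach: project to the $u$-coordinates and use that the coordinate $|u|^2$ restricted to a line with $v_u\neq0$ is a genuine quadratic with leading coefficient $|v_u|^2>0$. You make the paper's brief sketch fully explicit, separating off the case $v_u=0$ and obtaining the count of two directly from the degree of $g_1$. One small aside: downstream, the paper uses only the conclusion that $\Sigma$ contains no affine line, so the sharp count of two is not actually needed later.
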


\begin{proof}
If a line is contained in $\Sigma$, its projection onto the first three coordinates is either a point or a line $u_0+sv$.  The first case gives a constant line.  In the second case, the coordinate $|u_0+sv|^2$ has quadratic coefficient $|v|^2$, which cannot vanish for $v\neq0$.  Thus no nontrivial line is contained in $\Sigma$.  The intersection bound follows from Bezout, or directly from the quadratic coordinate.
\end{proof}

Let $A\subset\R^3$ be finite and put $X=\Phi(A)$.  Fix a difference
\[
 t=\Phi(u)-\Phi(v)=(h,\tau_1,\tau_2),
 \qquad h=u-v.
\]
Then $u$ satisfies
\begin{align}
 2u\cdot h-|h|^2&=\tau_1,\label{eq:fiber-one}\\
 2u\cdot\Lambda h-h\cdot\Lambda h&=\tau_2.\label{eq:fiber-two}
\end{align}
If $h$ and $\Lambda h$ are linearly independent, these equations define an affine line in parameter space.  Since the eigenvalues are distinct, dependence occurs exactly when
\begin{equation}\label{eq:rankdrop-directions}
 h\in\R e_1\cup\R e_2\cup\R e_3.
\end{equation}

\subsection{The rank-drop energy}
Let $\cR$ denote the set of differences whose parameter displacement belongs to the union in \eqref{eq:rankdrop-directions}.

\begin{lemma}\label{lem:gram-rankdrop}
For every finite $A\subset\R^3$ and $X=\Phi(A)$,
\begin{equation}\label{eq:gram-rankdrop}
 \sum_{t\in\cR}r_X(t)^2\leq 7\card X^2.
\end{equation}
\end{lemma}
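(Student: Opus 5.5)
The plan is to compute the fibers of the rank-drop differences exactly and then bound their energy by a Cauchy--Schwarz / Gram-matrix inequality, one eigendirection at a time. Since $\Phi$ is injective, a difference $t=(h,\tau_1,\tau_2)$ determines $h$. The set $\cR$ therefore splits into the zero difference ($h=0$) and three disjoint families $\cR_1,\cR_2,\cR_3$, where $\cR_i$ consists of the differences with $h=se_i$, $s\neq0$. The zero difference contributes exactly $r_X(0)^2=\card X^2$. It then suffices to prove $\sum_{t\in\cR_i}r_X(t)^2\le\card X^2$ for each $i$, which gives the total $4\card X^2\le7\card X^2$.

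Fix $i=1$; the other indices are symmetric. For $h=se_1$, equations \eqref{eq:fiber-one} and \eqref{eq:fiber-two} read
\[
2su_1-s^2=\tau_1,\qquad \lambda_1(2su_1-s^2)=\tau_2.
\]
So the fiber is the affine plane $\{u_1=c\}$ with $c=(\tau_1+s^2)/(2s)$. Conversely, the pair $(s,c)$ with $s\neq0$ determines $t$ uniquely, namely $t=(se_1,\,2sc-s^2,\,\lambda_1(2sc-s^2))$. Thus $\cR_1$ is parametrized injectively by pairs $(c,c')=(c,c-s)$ with $c\neq c'$.

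For each real $a$ put $S_a=\{(u_2,u_3):(a,u_2,u_3)\in A\}\subset\R^2$. A representation $x-y=t$ with $t\leftrightarrow(c,c')$ is a pair $u\in A$, $v=u-se_1\in A$ with $u_1=c$. Equivalently, it is a point of $S_c\cap S_{c'}$, so
\[
r_X(t)=\card{S_c\cap S_{c'}}.
\]
This is the Gram entry $\langle\1_{S_c},\1_{S_{c'}}\rangle$. Cauchy--Schwarz gives $\card{S_c\cap S_{c'}}^2\le\card{S_c}\card{S_{c'}}$. Summing over all pairs $(c,c')$ then yields
\[
\sum_{t\in\cR_1}r_X(t)^2\le\Bigl(\sum_c\card{S_c}\Bigr)^2=\card A^2=\card X^2.
\]

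The only step needing care is the bookkeeping that keeps the families from being counted twice. I need to check that distinct $(s,c)$ give distinct $t$, and that the three families $\cR_i$ are disjoint from each other and from $t=0$. Both follow because $t$ determines $h$, and the axes $\R e_i$ meet only at $0$. The distinctness of the $\lambda_i$ is used only through \eqref{eq:rankdrop-directions}, to identify $\cR$ with these coordinate directions. The constant $7$ then leaves room to spare.
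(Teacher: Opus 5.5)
Your proof is correct and follows essentially the same route as the paper: split $\cR$ by eigendirection, identify the multiplicity of a nonzero $e_1$-direction difference with the Gram entry $\card{S_c\cap S_{c'}}$, and bound the sum of the squared entries by $\card A^2$. The differences are cosmetic. You use the entrywise bound $G_{c,c'}^2\le G_{c,c}G_{c',c'}$ where the paper uses $\operatorname{tr}(G^2)\le\operatorname{tr}(G)^2$, and counting the zero difference once gives you the sharper constant $4$.
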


\begin{proof}
We treat the direction $e_1$.  Let $R$ be the finite set of first coordinates occurring in $A$, let $Z$ be the finite set of pairs $(u_2,u_3)$ occurring in $A$, and define the incidence matrix
\[
 M_{r,z}=\1_A(r,z),
 \qquad r\in R,\ z\in Z.
\]
For $r\neq s$, the full difference associated with a pair $(r,z),(s,z)$ determines the ordered pair $(r,s)$, since
\[
 h=r-s,
 \qquad
 r^2-s^2=h(r+s).
\]
Its multiplicity is therefore
\[
 G_{r,s}=\sum_zM_{r,z}M_{s,z},
 \qquad G=MM^{\mathsf T}.
\]
Hence the nonzero $e_1$-direction contribution is at most
\[
 \sum_{r\neq s}G_{r,s}^2
 \leq\operatorname{tr}(G^2)
 \leq\operatorname{tr}(G)^2
 =\card A^2,
\]
because $G$ is positive semidefinite and $\operatorname{tr}(G)=\card A$.  The zero difference contributes $\card A^2$.  Thus the $e_1$ family contributes at most $2\card A^2$.  The same argument applies to $e_2$ and $e_3$.  Counting the zero difference only once gives \eqref{eq:gram-rankdrop}.
\end{proof}

\subsection{A finite Freiman projection for wall surfaces}
We need a version of the Jing--Wu theorem for a finite subset of a surface in an arbitrary ambient space.

\begin{lemma}\label{lem:freiman-projection}
Let $S\subset\R^M$ be an admissible irreducible algebraic surface of degree at most $B$ that contains no affine line, and let $A\subset S$ be finite.  There is a linear map
\[
 L:\R^M\longrightarrow\R^3
\]
such that:
\begin{enumerate}[label=(\roman*)]
 \item $L$ is a Freiman isomorphism of order two on $A$;
 \item the Zariski closure $S'=\overline{L(S)}^{\,\mathrm{Zar}}$ is an irreducible nonplanar algebraic surface of degree $O_B(1)$;
 \item every affine line in $S'$ contains at most $B$ points of $L(A)$.
\end{enumerate}
Consequently,
\begin{equation}\label{eq:surface-any-ambient}
 E(A)\ll_{B,\eps}\card A^{2+\eps}.
\end{equation}
\end{lemma}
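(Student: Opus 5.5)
Choose $L$ generically from the space $\mathrm{Hom}(\R^M,\R^3)$. Each of (i)--(iii) should fail only on a proper algebraic subset of this space. The energy bound then follows by applying Jing--Wu (\cref{cor:recover-jw}) to $L(A)\subset S'$.

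\emph{Step 1: the Freiman condition (i).} $L$ is a Freiman isomorphism of order two on $A$ exactly when, for all $a_1,a_2,a_3,a_4\in A$,
\[
 L(a_1+a_2-a_3-a_4)=0 \iff a_1+a_2-a_3-a_4=0.
\]
The nonzero vectors $w=a_1+a_2-a_3-a_4$ form a finite set. For each such $w$, the condition $Lw=0$ cuts out a proper linear subspace of $\mathrm{Hom}(\R^M,\R^3)$. Avoiding finitely many such subspaces is a generic condition.

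\emph{Step 2: the image surface (ii).} Since $\dim S=2<3$, a generic linear projection restricted to $S_{\C}$ is finite onto its image. Hence $S'$ is an irreducible surface, with $\deg S'\le\deg S\le B$ by the usual degree bound for linear images. Nonplanarity should also be generic. $S$ itself is not contained in an affine plane, because an irreducible surface contained in a $2$-plane equals it, and a plane contains lines. Thus the affine hull of $S$ has dimension at least $3$. A generic $L$ maps this affine hull onto $\R^3$, and then $L(S)$ spans $\R^3$ affinely.

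\emph{Step 3: lines in the image (iii).} This is the main obstacle. A line $\ell\subset S'$ lifts to the curve $C=S\cap L^{-1}(\ell)$. Since $S$ contains no lines, $C$ is not a line. The worry is that a whole one-parameter family of non-line curves on $S$ could become lines after projection.

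To rule this out, I would use a dimension count. Lines in $S'$ correspond to curves of $S$ lying in a $3$-plane of the form $L^{-1}(\ell)$, i.e.\ a plane containing $\ker L$ (assuming $M\ge 4$; if $M\le 3$, take $L$ to be the identity composed with an inclusion). Consider the incidence variety of pairs $(L,\ell)$ with $\ell\subset\overline{L(S)}$. I would show it has fibre dimension over $L$ that is generically at most $1$, with each such $\ell$ meeting $L(A)$ in a bounded number of points.

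A simpler route gives (iii) directly. For each of the finitely many triples of distinct, non-collinear points $a,b,c\in A$, require that $L(a),L(b),L(c)$ be non-collinear. This is again a generic condition: it excludes the proper algebraic set where $L(b-a)\wedge L(c-a)=0$. Note that triples of collinear points in $A$ lie on a line not contained in $S$, so by Bezout that line contains at most $B$ points of $A$.

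Under this condition, any line in $\R^3$ (in particular, any line in $S'$) contains $L(\text{points})$ only from a single line of $\R^M$. Combined with injectivity of $L$ on $A$ (from Step 1), this shows every line in $S'$ contains at most $B$ points of $L(A)$. This sidesteps the incidence analysis entirely.

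\emph{Step 4: conclusion.} Fix $L$ generic for the finitely many conditions in Steps 1--3 together. Since $L$ is a Freiman isomorphism of order two on $A$, it preserves energy, so $E(A)=E(L(A))$. By (ii), $S'$ is an admissible irreducible nonplanar surface of degree $O_B(1)$, so \cref{cor:recover-jw} applies with $\Lambda_{S'}(L(A))\le B$. This gives $E(A)\ll_{B,\eps}\card A^{2+\eps}$.
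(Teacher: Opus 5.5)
Your proposal is correct and follows the paper's proof essentially step for step. You choose a generic $L$ that avoids the finitely many kernel conditions coming from $(A+A-A-A)\setminus\{0\}$ and keeps noncollinear triples noncollinear; you use generic projection, together with the fact that the affine span of $S$ has dimension at least three, to get an irreducible nonplanar image of bounded degree; you use Bezout to lift a line in $S'$ containing more than $B$ points of $L(A)$ to a line contained in $S$; and you conclude with $E(A)=E(L(A))$ and Jing--Wu. The incidence-variety route you first sketch in Step 3 is unnecessary (and its dimension count is off, since $L^{-1}(\ell)$ has dimension $M-2$ rather than $3$), but you rightly drop it in favor of the triple-noncollinearity argument, which is exactly the paper's.
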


\begin{proof}
Choose $L$ generically.  The failure of the Freiman property means that $\ker L$ contains a nonzero vector from the finite set
\[
 (A+A-A-A)\setminus\{0\},
\]
so this is avoided by a generic choice.  We also require that every noncollinear triple in $A$ remain noncollinear after projection; each failure is a proper algebraic condition on $L$.

The linear maps for which the image dimension drops, the affine span of the image is planar, or the generic projection degree/degree bound fails form a proper algebraic exceptional set.  Thus a generic linear projection of an irreducible surface has two-dimensional irreducible image closure and degree bounded in terms of $B$; this is the standard generic-projection theorem, together with the degree bound for a linear image.  Since $S$ contains no affine plane, its affine span has dimension at least three, and $L$ may be chosen so that the image is not planar.

Suppose an affine line $\ell\subset S'$ contains more than $B$ points of $L(A)$.  By preservation of noncollinearity, their preimages lie on one affine line in $\R^M$.  That line meets $S$ in more than $B$ points, so Bezout forces it to be contained in $S$, a contradiction.  Thus the line concentration of $L(A)$ is at most $B$.  Since $L$ preserves all additive quadruples, $E(A)=E(L(A))$, and \eqref{eq:surface-any-ambient} follows from \eqref{eq:jw-intro}.
\end{proof}

For curves, an even simpler observation suffices.

\begin{lemma}\label{lem:curve-energy}
Let $C\subset\R^M$ be an admissible irreducible algebraic curve of degree at most $B$ that is not an affine line.  Then every finite $A\subset C$ satisfies
\begin{equation}\label{eq:curve-energy}
 E(A)\ll_B\card A^2.
\end{equation}
\end{lemma}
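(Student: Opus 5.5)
The plan is to bound $r_A(t)$ uniformly for every nonzero $t$, and then sum. For $t\neq0$ we have $r_A(t)\le\card{A\cap(C+t)}$: every representation $x-y=t$ with $x,y\in A$ has $x\in C\cap(C+t)$, and $y$ is determined by $x$. So the crux is to show that for $t\neq0$ the set $C_{\C}\cap(C_{\C}+t)$ is finite, with cardinality bounded in terms of $B$. Granting this, $r_A(t)\le K_B$ for all $t\neq0$, and therefore
\[
 E(A)=r_A(0)^2+\sum_{t\neq0}r_A(t)^2\le\card A^2+K_B\sum_t r_A(t)=(1+K_B)\card A^2,
\]
which is \eqref{eq:curve-energy}.

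To prove finiteness, first note that $C_{\C}$ and $C_{\C}+t$ are irreducible curves. Their intersection is either finite or equal to both, in which case $t\in H_C$. By \cref{lem:stabilizer-linear}, a nonzero $t\in H_C$ would give $C+st=C$ for all real $s$. Then for any $x\in C$ the whole line $x+\R t$ lies in $C$, and by irreducibility and dimension $C$ would be that affine line. This is excluded by hypothesis, so $H_C=\{0\}$ and $C_{\C}\cap(C_{\C}+t)$ is zero-dimensional for every $t\neq0$.

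It remains to bound the number of intersection points uniformly in $t$. I would use the affine Bezout bound recalled at the end of \cref{sec:algebraic-preliminaries}. Writing $C_{\C}$ as an irreducible component of the zero set of polynomials of degree $O_B(1)$, the translate $C_{\C}+t$ has the same degree $\le B$. A zero-dimensional intersection of two curves of degree at most $B$ then has at most $O_{M,B}(1)$ points; refined Bezout gives at most $B^2$ isolated points. I expect this uniform count to be the only delicate point, and the standard bound on the number of isolated components of an intersection of bounded-degree varieties, as in \cite{BPR}, suffices. The dependence on $M$ is harmless, or can be removed by first projecting generically to $\R^3$ as in \cref{lem:freiman-projection}.

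This argument is essentially the one sketched inside the proof of \cref{cor:recover-jw} for nonlinear curves. Here it is simply carried out in an arbitrary ambient dimension and with an explicit summation.
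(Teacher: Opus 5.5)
Your proof is correct and follows the paper's argument. A nonzero translation stabilizer would force $C$ to be an affine line, so $C\cap(C+t)$ is finite with Bezout-bounded size for every $t\neq0$; then bounding $r_A(t)^2\le K_B\,r_A(t)$ for $t\neq0$ and using $\sum_t r_A(t)=\card A^2$ gives \eqref{eq:curve-energy}, and your version just spells out the irreducibility and stabilizer steps more explicitly (the generic projection aside is unnecessary, since refined Bezout already gives at most $B^2$ isolated points independently of $M$).
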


\begin{proof}
If $t\neq0$ and $C=C+t$, then the translation stabilizer contains a line, forcing $C$ itself to be an affine line.  Hence $C$ and $C+t$ are distinct for every $t\neq0$, and Bezout gives
\[
 r_A(t)\leq\card{C\cap(C+t)}\ll_B1.
\]
The zero difference contributes $\card A^2$, and
\[
 \sum_{t\neq0}r_A(t)^2\ll_B\sum_t r_A(t)=O_B(\card A^2).
\]
\end{proof}

\subsection{Polynomial partitioning in parameter space}
Apply the three-dimensional polynomial partitioning theorem to $A\subset\R^3$ at degree $D$.  Let
\[
 A_0=A\cap Z(q),
 \qquad
 A\setminus A_0=\bigsqcup_i A_i,
 \qquad
 X_i=\Phi(A_i),
 \qquad
 Y=\bigsqcup_iX_i.
\]
Then
\begin{equation}\label{eq:R3-cells}
 \card{A_i}\ll\frac{\card A}{D^3},
 \qquad
 \#\{i:A_i\neq\varnothing\}\ll D^3.
\end{equation}

\begin{proposition}\label{prop:threefold-cell}
With the notation above,
\begin{equation}\label{eq:threefold-cell}
 E(Y)\ll D\card X^2+D^2\sum_iE(X_i).
\end{equation}
\end{proposition}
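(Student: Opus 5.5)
This is the two-crossing argument of \cref{prop:two-crossing}, carried out in the parameter space $\R^3$ rather than on $\Sigma$ itself. The generic difference fiber is an affine line, so the crossing count has exponent $\sigma=1$. The exceptional differences are exactly the rank-drop set $\cR$, whose total energy is $O(\card X^2)$ by \cref{lem:gram-rankdrop}. The identity difference $t=0$ lies in $\cR$, since $h=0$.

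\emph{Step 1: counting cell pairs for a good difference.} Fix $t=(h,\tau_1,\tau_2)\notin\cR$. A representation $t=\Phi(u)-\Phi(v)$ forces $v=u-h$, and by \eqref{eq:fiber-one}--\eqref{eq:fiber-two} the point $u$ lies on the affine line
\[
 \ell_t=\{u:2u\cdot h=\tau_1+|h|^2,\ 2u\cdot\Lambda h=\tau_2+h\cdot\Lambda h\}.
\]
This is a line because $h,\Lambda h$ are independent. Restrict the two polynomials $q(u)$ and $q(u-h)$ to $\ell_t$. Neither vanishes identically there, since otherwise the points of $A$ on $\ell_t$ would lie in $Z(q)$ or be shifted into $Z(q)$, not into cells; if one does vanish identically, that branch contributes nothing. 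Each has at most $D$ roots, so $\ell_t$ minus these roots has at most $2D+1$ intervals. On each interval, $u$ and $u-h$ stay in fixed cells, since the cells are connected components of $\R^3\setminus Z(q)$. Hence
\[
 \#\{(i,j):r_{X_i-X_j}(t)>0\}\le 2D+1,
\]
and Cauchy--Schwarz gives $r_Y(t)^2\ll D\sum_{i,j}r_{X_i-X_j}(t)^2$.

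\emph{Step 2: summing.} Summing over $t\notin\cR$ and bounding the $t\in\cR$ terms by $\sum_{t\in\cR}r_Y(t)^2\le 7\card X^2$ via \cref{lem:gram-rankdrop} applied to $Y\subset X$, we get
\[
 E(Y)\ll\card X^2+D\sum_{i,j}E(X_i,X_j).
\]
By \eqref{eq:mixed-identity} and \eqref{eq:crossing-rearrange},
\[
 \sum_{i,j}E(X_i,X_j)=\sum_s\Bigl(\sum_i r_{X_i-X_i}(s)\Bigr)^2.
\]
For $s\notin\cR$, the same line argument with $i=j$ shows at most $O(D)$ indices $i$ have $r_{X_i-X_i}(s)>0$, so the square is $\ll D\sum_i r_{X_i-X_i}(s)^2$. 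For $s\in\cR$, the inner sum is at most $r_X(s)$, so these terms total $O(\card X^2)$, again by \cref{lem:gram-rankdrop}. Combining,
\[
 E(Y)\ll\card X^2+D\card X^2+D^2\sum_iE(X_i),
\]
which is \eqref{eq:threefold-cell}.

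\emph{Main obstacle.} The delicate point is Step 1, specifically the claim that the ordered cell pair is constant on each interval of $\ell_t$. This uses that cells are connected components in $\R^3$ and that $u\mapsto u-h$ is a translation. Both are clean here because we partition in parameter space, where $\Phi$ is a bijection onto $\Sigma$. A secondary issue is that the rank-drop set must be handled before the line argument, because there the fiber is a plane and the crossing count would cost $D^2$. This is precisely why the Gram bound of \cref{lem:gram-rankdrop} is used in place of the flag parameter.
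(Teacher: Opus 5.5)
Your proof is correct and is essentially the paper's argument. It uses the same line-fiber count for $t\notin\cR$ giving $O(D)$ cell pairs, Cauchy--Schwarz, the rearrangement via \eqref{eq:mixed-identity}, and \cref{lem:gram-rankdrop} to absorb the rank-drop differences at both crossing stages; your explicit $2D+1$ interval count and the remark that $t=0\in\cR$ are only slightly more detailed bookkeeping of the same steps.
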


\begin{proof}
For a difference $t\notin\cR$, the solution set of \eqref{eq:fiber-one}--\eqref{eq:fiber-two} is an affine line $L_t$ in parameter space.  The restrictions of $q(u)$ and $q(u-h)$ to this line have degree at most $D$.  If either restriction vanishes identically, there is no contribution from $Y$ on the corresponding side.  Otherwise their zero sets cut $L_t$ into $O(D)$ intervals.  On each interval, $u$ and $u-h$ remain in fixed partition cells.  Hence
\begin{equation}\label{eq:threefold-first-count}
 \#\{(i,j):r_{X_i-X_j}(t)>0\}\ll D,
 \qquad t\notin\cR.
\end{equation}
Cauchy--Schwarz and \cref{lem:gram-rankdrop} give
\begin{equation}\label{eq:threefold-first-cross}
 E(Y)\ll\card X^2
 +D\sum_{i,j}\sum_t r_{X_i-X_j}(t)^2.
\end{equation}
By \eqref{eq:mixed-identity},
\[
 \sum_{i,j}\sum_t r_{X_i-X_j}(t)^2
 =\sum_s\left(\sum_i r_{X_i-X_i}(s)\right)^2.
\]
For $s\notin\cR$, \eqref{eq:threefold-first-count} with $i=j$ gives at most $O(D)$ nonzero summands.  The contribution from $s\in\cR$ is bounded by \cref{lem:gram-rankdrop}.  Therefore
\begin{equation}\label{eq:threefold-second-cross}
 \sum_{i,j}\sum_t r_{X_i-X_j}(t)^2
 \ll\card X^2+D\sum_iE(X_i).
\end{equation}
Substitution into \eqref{eq:threefold-first-cross} proves the proposition.
\end{proof}

\subsection{The wall and the proof of the theorem}
Let
\[
 W=\Phi(A_0).
\]
Factor $q$ and decompose $A_0$ among the real irreducible components of its zero set.  Under the polynomial embedding $\Phi$, each positive-dimensional component becomes an algebraic subvariety of $\Sigma$ of dimension at most two and degree $O_D(1)$.  By \cref{lem:no-lines}, none contains an affine line.  Two-dimensional components satisfy \eqref{eq:surface-any-ambient}; one-dimensional components satisfy \eqref{eq:curve-energy}; zero-dimensional components contain $O_D(1)$ points.  The fourth-root union inequality therefore gives, for every $\eta>0$,
\begin{equation}\label{eq:threefold-wall}
 E(W)\ll_{D,\eta}\card W^{2+\eta}.
\end{equation}

\begin{proof}[Proof of \cref{thm:quadratic-threefold-intro}]
We argue by strong induction on $N=\card X$.  Take $\eta=\eps/2$ and fix a sufficiently large constant degree $D=D(\eps)$.  By \eqref{eq:union-fourth-root}, \cref{prop:threefold-cell}, and \eqref{eq:threefold-wall},
\begin{equation}\label{eq:threefold-recursion}
 E(X)
 \ll D N^2+D^2\sum_iE(X_i)+C_{D,\eps}N^{2+\eps/2}.
\end{equation}
Using the inductive hypothesis and \eqref{eq:R3-cells},
\begin{align}
 D^2\sum_iE(X_i)
 &\ll C_\eps D^2
 \left(\frac{N}{D^3}\right)^{1+\eps}
 \sum_i\card{X_i}\notag\\
 &\ll C_\eps D^{-1-3\eps}N^{2+\eps}.
\end{align}
Choose $D$ so that the coefficient of $C_\eps N^{2+\eps}$ is less than $1/4$.  The first and third terms in \eqref{eq:threefold-recursion} are absorbed for $N$ above a fixed threshold, and the remaining finite range is included in the induction constant.  This proves \eqref{eq:quadratic-threefold-intro}.

The diagonal solutions give $E(X)\geq2N^2-N$, so the exponent is sharp.
\end{proof}

\begin{remark}\label{rem:repeated-spectrum}
If $Q_2=\lambda Q_1$, then $\Sigma(Q_1,Q_2)$ lies in an affine hyperplane and is linearly equivalent to a three-dimensional paraboloid in $\R^4$.  Lattice boxes on that paraboloid have energy of order $N^{7/3}$; see the discussion surrounding the paraboloid threshold in \cite{Mudgal}.  Thus a rank condition on the quadratic pencil is genuinely necessary for a universal $N^{2+\eps}$ theorem.  The intermediate multiplicity pattern $2+1$ should admit a refinement using lower-dimensional paraboloid fibers, but it is not treated here.
\end{remark}

\section{Weighted and off-diagonal Young inequalities}\label{sec:weighted-young}

\subsection{From hereditary energy to weighted \texorpdfstring{$L^4$}{L4}}
We first record an abstract extension principle.

\begin{proposition}\label{prop:restricted-to-strong}
Let $X$ be finite, let $2\leq a<3$, and let $K\geq1$.  Suppose that for every $\eta>0$ there is $C_\eta$ such that
\begin{equation}\label{eq:hereditary-energy}
 E(A)\leq C_\eta K^{3-a}\card A^{a+\eta}
 \qquad(A\subset X).
\end{equation}
Then, for every $\eps>0$, there is $\eta=\eta(a,\eps)>0$ such that every $f$ supported on $X$ satisfies
\begin{equation}\label{eq:weighted-L4}
 \norm[L^4]{\widehat f}
 \ll_{a,\eps} C_\eta^{1/4}K^{(3-a)/4}\card X^\eps
 \norm[\ell^{4/a}]{f}.
\end{equation}
\end{proposition}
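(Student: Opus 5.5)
The plan is a dyadic level-set decomposition of $f$ combined with the union inequality \eqref{eq:union-fourth-root} in its weighted form. By homogeneity, and since $\norm[L^4]{\widehat f}$ changes only by absolute constants when $f$ is split into real/imaginary and positive/negative parts, we may assume $f\geq0$ and $\norm[\ell^\infty]{f}=1$. Write $f=\sum_{j\geq0}f_j$, where $f_j=f\1_{A_j}$ and
\[
 A_j=\{x\in X:2^{-j-1}<f(x)\leq2^{-j}\}.
\]
Levels with $2^{-j}\leq\card X^{-10}$ are discarded into a tail $g$. For the tail we use the trivial bound $\norm[L^4]{\widehat g}\leq\norm[\ell^1]{g}\leq\card X^{-9}$, which is certainly at most $\norm[\ell^{4/a}]{f}$, because $\norm[\ell^{4/a}]{f}\geq\norm[\ell^\infty]{f}=1$. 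This leaves $J\ll\log\card X$ essential levels.

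For each level we bound $\norm[L^4]{\widehat{f_j}}$ from the set energy. The key point is that for a nonnegative function pointwise dominated by $2^{-j}\1_{A_j}$, the quantity $\norm[L^4]{\widehat{f_j}}^4=\sum_t R_{f_j}(t)^2$ from \eqref{eq:weighted-energy} is monotone in the function, since every $R_{f_j}(t)$ is a sum of nonnegative products. Hence
\[
 \norm[L^4]{\widehat{f_j}}^4\leq2^{-4j}E(A_j)\leq C_\eta K^{3-a}2^{-4j}\card{A_j}^{a+\eta}.
\]
Since $f\geq2^{-j-1}$ on $A_j$, we have $2^{-j}\card{A_j}^{a/4}\leq2\norm[\ell^{4/a}]{f_j}\leq2\norm[\ell^{4/a}]{f}$. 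This gives
\[
 \norm[L^4]{\widehat{f_j}}\leq 2C_\eta^{1/4}K^{(3-a)/4}\card X^{\eta/4}\norm[\ell^{4/a}]{f}.
\]

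The final step is the triangle inequality in $L^4$ over the $J\ll\log\card X$ levels. It costs a factor $\log\card X\ll_\eps\card X^{\eps/2}$, and we choose $\eta=2\eps$ so that $\card X^{\eta/4}=\card X^{\eps/2}$. Multiplying the two losses gives \eqref{eq:weighted-L4}; the implicit constant depends only on $\eps$ (and trivially on $a$).

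The only delicate point is the positivity/monotonicity step that lets $E(A_j)$ dominate the weighted energy of $f_j$. This step is why we first reduce to $f\geq0$, using the four-part splitting of $f$ and the triangle inequality in $L^4$. The logarithmic count of levels is what produces the $\card X^\eps$ loss; no sharper pigeonholing is needed.
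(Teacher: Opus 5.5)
Your argument is correct, and it is essentially the paper's proof written out by hand. The paper phrases the level-set step as upgrading the restricted-type $(p_\eta,4)$ bound, with $1/p_\eta=(a+\eta)/4$, to an $\ell^{p_\eta,1}$ bound (the ``dyadic decomposition of the coefficients'' it mentions), then pays $(\log 2N)^{1-1/p_\eta}$ to reach $\ell^{p_\eta}$ and $N^{\eta/4}$ by H\"older to reach $\ell^{4/a}$; your positivity comparison $R_{f_j}\le 2^{-2j}r_{A_j}$ and the per-level bound $\card{A_j}^{\eta/4}\le\card X^{\eta/4}$ produce the same two losses directly. One detail you leave implicit is that absorbing the tail into the right-hand side needs $C_\eta K^{3-a}\ge1$, which follows by applying the hypothesis to a singleton $A$.
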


\begin{proof}
Choose $\eta=\eta(a,\eps)>0$ sufficiently small and put
\[
 \frac1{p_\eta}=\frac{a+\eta}{4}.
\]
The indicator estimate \eqref{eq:hereditary-energy} says that the Fourier extension operator is of restricted type $(p_\eta,4)$ with norm $O(C_\eta^{1/4}K^{(3-a)/4})$.  The standard restricted-type extension (equivalently, a dyadic decomposition of the coefficients) gives
\begin{equation}\label{eq:lorentz-step}
 \norm[L^4]{\widehat f}
 \ll C_\eta^{1/4}K^{(3-a)/4}\norm[\ell^{p_\eta,1}]{f}.
\end{equation}
For a sequence supported on at most $N=\card X$ points,
\[
 \norm[\ell^{p_\eta,1}]{f}
 \ll_{p_\eta}(\log(2N))^{1-1/p_\eta}\norm[\ell^{p_\eta}]{f}.
\]
Since $p_\eta<4/a$,
\[
 \norm[\ell^{p_\eta}]{f}
 \leq N^{\eta/4}\norm[\ell^{4/a}]{f}.
\]
Choose $\eta$ sufficiently small in terms of $\eps$ and absorb the logarithm into $N^\eps$.  This proves the claim.
\end{proof}

Applying \cref{thm:flagged-main} to every subset of $X$ is legitimate because the flag parameter is monotone.  We therefore obtain the following.

\begin{corollary}\label{cor:weighted-flag}
In the setting of \cref{thm:flagged-main}, let $\Lambda=\Lambda_{a,R}(X;V)$.  Then
\begin{equation}\label{eq:weighted-flag}
 \norm[L^4]{\widehat f}
 \ll_{V,a,\eps}
 \Lambda^{(3-a)/4}\card X^\eps
 \norm[\ell^{4/a}]{f},
 \qquad \supp f\subset X.
\end{equation}
\end{corollary}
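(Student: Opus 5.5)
The plan is to feed the hereditary bound supplied by \cref{thm:flagged-main} into \cref{prop:restricted-to-strong}, with $K=\Lambda=\Lambda_{a,R}(X;V)$. Fix $\eps>0$ and let $\eta=\eta(a,\eps)$ be the parameter produced by \cref{prop:restricted-to-strong}. The degree cutoff $R$ will be the one given by \cref{thm:flagged-main} for this $\eta$, namely $R=R(n,\Delta,a,\eta)$. Since $\eta$ depends only on $a$ and $\eps$, the resulting constants depend only on $V,a,\eps$.

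\textbf{Step 1: hereditary energy bound.} Let $A\subset X$. Then $A$ is a finite subset of $V$, so \cref{thm:flagged-main} applied with exponent $\eta$ gives
\[
 E(A)\leq C_\eta\,\Lambda_{a,R}(A;V)^{3-a}\card A^{a+\eta}.
\]
By the monotonicity \eqref{eq:flag-monotone}, $\Lambda_{a,R}(A;V)\leq\Lambda_{a,R}(X;V)=\Lambda$. Because $3-a>0$, raising to this power preserves the inequality. Hence \eqref{eq:hereditary-energy} holds for every $A\subset X$, with $K=\Lambda\geq1$.

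\textbf{Step 2: pass to weighted $L^4$.} With the hereditary bound in hand, \cref{prop:restricted-to-strong} gives \eqref{eq:weighted-L4} directly. The factor $C_\eta^{1/4}$ is a constant depending only on $n,\Delta,a,\eps$, so it is absorbed into the implied constant in \eqref{eq:weighted-flag}.

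\textbf{Main subtlety.} The key point is consistency of the parameters. \cref{prop:restricted-to-strong} needs the energy bound for the specific $\eta$ it selects, not for every $\eta$ with a single $R$. So $R$ must be fixed only after $\eps$, and therefore $\eta$, has been chosen. The flag parameter appearing in \eqref{eq:weighted-flag} is then understood with this $R$, which is the same convention as in \cref{thm:flagged-main}. Once the order of choices is set this way, the deduction is routine.
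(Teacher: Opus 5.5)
Your proposal is correct and follows the paper's argument: apply \cref{thm:flagged-main} to every $A\subset X$, use the monotonicity \eqref{eq:flag-monotone} to replace $\Lambda_{a,R}(A;V)$ by $\Lambda$, and feed the resulting hereditary bound into \cref{prop:restricted-to-strong}. Your remark on the order of choices is a clarification the paper leaves implicit, and it is sound: the proof of \cref{prop:restricted-to-strong} invokes the hypothesis only at the single exponent $\eta(a,\eps)$, so fixing $R$ after $\eta$ is legitimate.
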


For the quadratic threefold, \cref{thm:quadratic-threefold-intro} gives
\begin{equation}\label{eq:weighted-threefold}
 \norm[L^4]{\widehat f}
 \ll_\eps\card X^\eps\norm[\ell^2]{f}.
\end{equation}

\subsection{The off-diagonal interpolation polygon}
Let $X,Y$ satisfy hereditary estimates with exponents $a_X,a_Y$ and parameters $K_X,K_Y$.  Put
\[
 p_X=\frac4{a_X},
 \qquad
 p_Y=\frac4{a_Y}.
\]
The geometric endpoint obtained from \eqref{eq:bilinear-correlation} and \cref{prop:restricted-to-strong} is
\begin{equation}\label{eq:geometric-bilinear-endpoint}
 \norm[\ell^2]{f*g}
 \ll (K_X^{3-a_X}K_Y^{3-a_Y})^{1/4}
 (\card X\card Y)^\eps
 \norm[\ell^{p_X}]{f}\norm[\ell^{p_Y}]{g}.
\end{equation}
The classical endpoints are
\begin{equation}\label{eq:classical-bilinear-endpoints}
 \norm[\ell^2]{f*g}\leq\norm[\ell^1]{f}\norm[\ell^2]{g},
 \qquad
 \norm[\ell^2]{f*g}\leq\norm[\ell^2]{f}\norm[\ell^1]{g}.
\end{equation}

\begin{theorem}\label{thm:general-offdiag}
Let $X,Y$ be finite sets.  Suppose that, for every $\eta>0$ and all subsets $A\subset X$, $B\subset Y$,
\[
 E(A)\ll_\eta K_X^{3-a_X}\card A^{a_X+\eta},
 \qquad
 E(B)\ll_\eta K_Y^{3-a_Y}\card B^{a_Y+\eta},
\]
where $2\leq a_X,a_Y<3$ and $K_X,K_Y\geq1$.  Let $\theta_0,\theta_1,\theta_2\geq0$ with $\theta_0+\theta_1+\theta_2=1$, and define
\begin{align}
 \frac1p&=\theta_0\frac{a_X}{4}+\theta_1+\frac{\theta_2}{2},\label{eq:p-barycentric}\\
 \frac1q&=\theta_0\frac{a_Y}{4}+\frac{\theta_1}{2}+\theta_2.\label{eq:q-barycentric}
\end{align}
Then
\begin{equation}\label{eq:general-offdiag}
 \norm[\ell^2]{f*g}
 \ll_\eps
 K_X^{\theta_0(3-a_X)/4}
 K_Y^{\theta_0(3-a_Y)/4}
 (\card X\card Y)^\eps
 \norm[\ell^p]{f}\norm[\ell^q]{g}.
\end{equation}
The same estimate holds after decreasing either $p$ or $q$.
\end{theorem}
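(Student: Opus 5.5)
The plan is to treat $(f,g)\mapsto f*g$ as a bilinear operator and interpolate between three vertices in the $(1/p,1/q)$ square. The vertices are the geometric endpoint \eqref{eq:geometric-bilinear-endpoint} at $(a_X/4,a_Y/4)$ and the two classical endpoints \eqref{eq:classical-bilinear-endpoints} at $(1,1/2)$ and $(1/2,1)$. The barycentric formulas \eqref{eq:p-barycentric}--\eqref{eq:q-barycentric} are exactly convex combinations of these three points with weights $\theta_0,\theta_1,\theta_2$. The constants at the vertices are $M_0=(K_X^{3-a_X}K_Y^{3-a_Y})^{1/4}(\card X\card Y)^\eps$, $M_1=1$ and $M_2=1$. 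Multilinear complex interpolation then yields the bound $M_0^{\theta_0}$, and $M_0^{\theta_0}$ is at most the stated constant since $\theta_0\le 1$.

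The first step is to establish the geometric vertex rigorously. By \eqref{eq:bilinear-correlation} and Cauchy--Schwarz,
\[
 \norm[\ell^2]{f*g}^2=\int|\widehat f|^2|\widehat g|^2\le \norm[L^4]{\widehat f}^2\norm[L^4]{\widehat g}^2 .
\]
Applying \cref{prop:restricted-to-strong} separately to $X$ and to $Y$, with $\eps$ replaced by a small multiple, gives \eqref{eq:geometric-bilinear-endpoint}. Here the hereditary hypotheses are used only through that proposition.

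The second step is the interpolation itself. All functions live on the fixed finite sets $X$ and $Y$, so we are working with finite-dimensional spaces $\ell^p(X)$ and $\ell^q(Y)$ and the bilinear map into $\ell^2$. Bilinear Riesz--Thorin applies: it needs only the three bounds at the vertices of a triangle. The standard route is to interpolate first along one edge and then between that edge and the remaining vertex. Alternatively, one can use the multilinear Riesz--Thorin theorem on the triangle directly, for example via the Stein analytic family $f_z=|f|^{\alpha(z)}\operatorname{sgn} f$ with affine exponents in two complex variables, or by iterating one-parameter interpolation twice. This gives \eqref{eq:general-offdiag} with constant $M_0^{\theta_0}M_1^{\theta_1}M_2^{\theta_2}$. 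The step I expect to need the most care is this justification of two-parameter (triangle) interpolation for a bilinear operator with mixed Lebesgue exponents. Iterating is legitimate, since for fixed $g$ the map $f\mapsto f*g$ is linear, but the two-stage iteration must be arranged carefully. Citing multilinear Riesz--Thorin directly is cleanest.

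Finally, for the last sentence of the statement: $X$ and $Y$ are finite, so decreasing $p$ to $p'<p$ gives $\norm[\ell^{p}]{f}\le\norm[\ell^{p'}]{f}$ by the nesting of $\ell^p$ norms on counting measure. The same estimate therefore persists, with no loss, whenever $p$ or $q$ is decreased.
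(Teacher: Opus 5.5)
Your proposal is correct and takes the same approach as the paper: the geometric vertex \eqref{eq:geometric-bilinear-endpoint} comes from Cauchy--Schwarz in \eqref{eq:bilinear-correlation} together with \cref{prop:restricted-to-strong}, and bilinear complex interpolation over the triangle with the two classical Young endpoints \eqref{eq:classical-bilinear-endpoints} gives \eqref{eq:general-offdiag}; the final clause follows from monotonicity of counting-measure norms. One small correction: the inequality $\norm[\ell^p]{f}\le\norm[\ell^{p'}]{f}$ for $p'<p$ holds for counting measure on any set, so the finiteness of $X$ and $Y$ is not what that step uses.
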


\begin{proof}
Apply bilinear complex interpolation to \eqref{eq:geometric-bilinear-endpoint} and the two estimates in \eqref{eq:classical-bilinear-endpoints}; see, for example, \cite{BerghLofstrom,Grafakos}.  The final assertion follows from monotonicity of counting-measure norms.
\end{proof}

When $a_X=a_Y=2$, the interpolation polygon and its upward monotone closure admit a simple description.

\begin{corollary}\label{cor:flagged-offdiag}
Let $V,W\subset\R^n$ be admissible irreducible bounded-degree varieties.  Choose target exponents $a_V,a_W\in[2,3)$ with $\alpha(V)\leq a_V$ and $\alpha(W)\leq a_W$, and let $R_V,R_W$ be the degree cutoffs supplied by \cref{thm:flagged-main}.  For finite $X\subset V$, $Y\subset W$, put
\[
 \Lambda_X=\Lambda_{a_V,R_V}(X;V),
 \qquad
 \Lambda_Y=\Lambda_{a_W,R_W}(Y;W).
\]
If $\theta_0,\theta_1,\theta_2$ and $p,q$ satisfy \eqref{eq:p-barycentric}--\eqref{eq:q-barycentric} with $a_X=a_V$ and $a_Y=a_W$, then
\begin{equation}\label{eq:flagged-offdiag}
 \norm[\ell^2]{f*g}
 \ll_{n,\deg V,\deg W,p,q,\eps}
 \Lambda_X^{\theta_0(3-a_V)/4}
 \Lambda_Y^{\theta_0(3-a_W)/4}
 (\card X\card Y)^\eps
 \norm[\ell^p]{f}\norm[\ell^q]{g}.
\end{equation}
The estimate remains valid after decreasing either input exponent.
\end{corollary}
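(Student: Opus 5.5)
The plan is to obtain \cref{cor:flagged-offdiag} as a direct specialization of \cref{thm:general-offdiag}, with \cref{thm:flagged-main} supplying the required hereditary hypotheses. The argument has three steps.

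First, I verify the hereditary energy hypotheses. Fix $\eta>0$ and apply \cref{thm:flagged-main} to $V$ with target exponent $a_V$ and $\eps$ replaced by $\eta$. This gives constants $R_V(\eta)$ and $C_V(\eta)$. One technical point needs care: \cref{thm:flagged-main} lets the degree cutoff $R$ depend on $\eps$, whereas the corollary fixes $R_V$. I handle this by fixing $R_V=R(n,\deg V,a_V,\eps_0)$ for one small $\eps_0$ tied to the final $\eps$. This is legitimate because the loss in \cref{prop:restricted-to-strong} only needs $\eta$ below a threshold depending on $a$ and $\eps$. For a larger cutoff $R'\ge R$ one has $\Lambda_{a,R}\le\Lambda_{a,R'}$, but that inequality runs the wrong way for this purpose, so I pin $R$ to the chosen $\eta$ rather than try to compare cutoffs. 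For every subset $A\subset X$, the theorem gives
\[
 E(A)\le C_V\,\Lambda_{a_V,R_V}(A;V)^{3-a_V}\card A^{a_V+\eta}.
\]
Since $A\subset X$, the monotonicity \eqref{eq:flag-monotone} gives $\Lambda_{a_V,R_V}(A;V)\le\Lambda_X$. This is exactly the hypothesis of \cref{thm:general-offdiag} with $K_X=\Lambda_X\ge1$ and $a_X=a_V$. The same argument applied to $W$ gives the hypothesis for $Y$ with $K_Y=\Lambda_Y$ and $a_Y=a_W$.

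Second, I invoke \cref{thm:general-offdiag} with these parameters. The barycentric relations \eqref{eq:p-barycentric}--\eqref{eq:q-barycentric} are assumed verbatim, and the theorem yields \eqref{eq:flagged-offdiag}. The implied constant depends on the constants $C_V,C_W$, which in turn depend only on $n$, $\deg V$, $\deg W$, $a_V$, $a_W$ and $\eps$. The dependence on $p,q$ enters through the interpolation and through the choice $\eta=\eta(a,\eps)$ made in \cref{prop:restricted-to-strong}. The final sentence of the corollary, that the estimate survives decreasing either input exponent, is the last assertion of \cref{thm:general-offdiag}. It holds because $\norm[\ell^{p}]{f}\le\norm[\ell^{p'}]{f}$ whenever $p'\le p$ on counting measure.

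The only step needing care is the bookkeeping of quantifiers: $R_V$ must be fixed before $\eta$ is taken arbitrarily small, while \cref{thm:general-offdiag} nominally asks for the hypothesis for every $\eta$. I resolve this by noting that the proof of \cref{prop:restricted-to-strong} uses only one $\eta=\eta(a,\eps)$. I therefore run the argument with that single $\eta$, choose $R_V=R(n,\deg V,a_V,\eta)$, and declare this to be the cutoff "supplied by \cref{thm:flagged-main}". Everything else is a direct substitution.
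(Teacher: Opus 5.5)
Your proposal is correct and is essentially the paper's own argument. The paper cites \cref{cor:weighted-flag}, whose justification is exactly your first step (apply \cref{thm:flagged-main} to every subset and use the monotonicity \eqref{eq:flag-monotone}), and then \cref{thm:general-offdiag}; your remark that \cref{prop:restricted-to-strong} needs only one $\eta=\eta(a,\eps)$, so $R_V,R_W$ can be fixed in terms of $\eps$ even though $\Lambda_{a,R}$ increases with $R$, makes explicit a quantifier point the paper leaves implicit.
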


\begin{proof}
Apply \cref{cor:weighted-flag,thm:general-offdiag} and use the monotonicity of the flag parameters on subsets.
\end{proof}

\begin{corollary}\label{cor:alpha2-offdiag}
Assume the hypotheses of \cref{thm:offdiag-alpha2-intro}.  Then \eqref{eq:offdiag-alpha2-intro} holds.
\end{corollary}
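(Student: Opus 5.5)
We derive \cref{cor:alpha2-offdiag} from \cref{thm:general-offdiag} with $a_X=a_Y=2$. In that case the hypotheses of \cref{thm:offdiag-alpha2-intro} are exactly the hypotheses of \cref{thm:general-offdiag}, since $K^{3-a}=K$. The work therefore reduces to three tasks: describe the set of pairs $(1/p,1/q)$ reached by the barycentric formulas \eqref{eq:p-barycentric}--\eqref{eq:q-barycentric}, take its upward monotone closure, and check that the resulting power of $K_XK_Y$ can be taken to be $\theta/4$.

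With $a_X=a_Y=2$, write $x=1/p$ and $y=1/q$. The three vertices are then $P_0=(1/2,1/2)$, with factor $(K_XK_Y)^{1/4}$, and $P_1=(1,1/2)$, $P_2=(1/2,1)$, with factor $1$. Adding the two formulas gives $x+y=\theta_0+\tfrac32(\theta_1+\theta_2)=\tfrac32-\tfrac12\theta_0$, so on the triangle
\[
 \theta_0=3-2(x+y).
\]
The triangle consists of the points with $x,y\geq1/2$ and $x+y\leq3/2$. The target region $1\leq p,q\leq2$, $1/p+1/q\geq1$, is the set of $(x,y)\in[1/2,1]^2$ with $x+y\geq1$. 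Any such point with $x+y\leq3/2$ lies in the triangle, and its power is $\theta_0/4=\theta/4$.

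For points with $x+y>3/2$, we use the final assertion of \cref{thm:general-offdiag}, which allows decreasing $p$ or $q$, i.e. increasing $x$ or $y$. From $(x,y)$ we decrease $x$ or $y$ (staying inside $[1/2,1]^2$) until $x+y=3/2$. The resulting point lies on the segment $P_1P_2$, where $\theta_0=0$. Monotonicity then gives the estimate at $(x,y)$ with no $K$ factor, which matches $\theta=\max\{0,3-2(x+y)\}=0$. Every hypothesis-compatible $(p,q)$ is handled, with the implied constant depending on $p,q,\eps$ through the barycentric coordinates.

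Two points need care. The main one is that \cref{thm:general-offdiag} is stated with constants $\ll_\eps$ uniform over the simplex; this should be checked, since bilinear interpolation gives constants bounded by a product of powers of the endpoint constants. The second is that the subpolynomial loss at $P_0$ comes from \cref{prop:restricted-to-strong} via \eqref{eq:bilinear-correlation} and Cauchy--Schwarz:
\[
 \norm[\ell^2]{f*g}\leq\norm[L^4]{\widehat f}\norm[L^4]{\widehat g},
\]
and it survives interpolation as $(\card X\card Y)^{\eps}$.

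The sharpness claims (that the region is sharp and that the joint power of $K_XK_Y$ is sharp) are not part of the corollary as stated. They are addressed by examples, namely diagonal sets and progressions on rulings, which I would defer to \cref{sec:higher-dimensional}.
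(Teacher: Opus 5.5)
Your proposal is correct and follows the paper's proof essentially verbatim. For $1\le 1/p+1/q\le 3/2$ you take $\theta_0=3-2(1/p+1/q)$ in \cref{thm:general-offdiag} and put the remaining mass on the two classical vertices; your description of the triangle as $x,y\ge 1/2$, $x+y\le 3/2$ is exactly why that split is possible. For $1/p+1/q>3/2$ you reduce to the classical Young segment, where $\theta_0=0$, by norm monotonicity, just as the paper does; your remark that the interpolation constants depend on $p,q$ is consistent with the $\ll_{p,q,\eps}$ in \eqref{eq:offdiag-alpha2-intro}.
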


\begin{proof}
For
\[
 1\leq\frac1p+\frac1q\leq\frac32,
\]
set
\[
 \theta_0=3-2\left(\frac1p+\frac1q\right).
\]
The remaining barycentric mass can be split between the two classical endpoints because $1/2\leq1/p,1/q\leq1$.  If $1/p+1/q\geq3/2$, use the classical Young line and norm monotonicity, so $\theta_0=0$.
\end{proof}

\subsection{Applications and sharpness}
\begin{corollary}\label{cor:surface-offdiag}
Let $X\subset V$ and $Y\subset W$, where $V,W\subset\R^3$ are admissible irreducible nonplanar algebraic surfaces of bounded degree.  For $1\leq p,q\leq2$ with $1/p+1/q\geq1$, let
\[
 \theta=\max\left\{0,3-2\left(\frac1p+\frac1q\right)\right\}.
\]
Then
\begin{equation}\label{eq:surface-offdiag}
 \norm[\ell^2]{f*g}
 \ll_{\deg V,\deg W,p,q,\eps}
 \bigl(\Lambda_V(X)\Lambda_W(Y)\bigr)^{\theta/4}
 (\card X\card Y)^\eps
 \norm[\ell^p]{f}\norm[\ell^q]{g}.
\end{equation}
\end{corollary}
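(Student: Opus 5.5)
The plan is to reduce \cref{cor:surface-offdiag} to \cref{thm:offdiag-alpha2-intro}, in the form \cref{cor:alpha2-offdiag}, with $K_X=\Lambda_V(X)$ and $K_Y=\Lambda_W(Y)$. All that is needed is to verify the hereditary hypotheses
\[
 E(A)\ll_{\deg V,\eta}\Lambda_V(X)\card A^{2+\eta}\quad(A\subset X),
 \qquad
 E(B)\ll_{\deg W,\eta}\Lambda_W(Y)\card B^{2+\eta}\quad(B\subset Y),
\]
for every $\eta>0$. After that, the interpolation in \cref{cor:alpha2-offdiag} produces exactly the stated exponent $\theta/4$ on $K_XK_Y$, together with the region $1\le p,q\le2$, $1/p+1/q\ge1$.

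\textbf{Step 1: the hereditary hypothesis.} Fix $A\subset X\subset V$. By \cref{cor:recover-jw}, with $a=2$ and $\eps$ replaced by $\eta$, we get $E(A)\ll_{\deg V,\eta}\Lambda_V(A)\card A^{2+\eta}$. Heredity comes from monotonicity of the line parameter. Since $A\subset X$, each affine line $\ell\subset V$ satisfies $\card{A\cap\ell}\le\card{X\cap\ell}$, so $\Lambda_V(A)\le\Lambda_V(X)$. Alternatively one can pass through the flag parameter: by \eqref{eq:flag-monotone} it is monotone, and by the proof of \cref{cor:recover-jw} it is comparable to $\Lambda_V$ up to degree-dependent constants. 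These constants depend only on $\deg V$, because $R$ depends only on $n=3$, $\Delta$, $a=2$ and $\eta$. The same argument applies to $Y\subset W$.

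\textbf{Step 2: apply the off-diagonal theorem.} Now invoke \cref{cor:alpha2-offdiag}. When $1\le 1/p+1/q\le 3/2$, take $\theta_0=3-2(1/p+1/q)$ in \cref{thm:general-offdiag} and split the remaining barycentric mass between the two classical endpoints \eqref{eq:classical-bilinear-endpoints}. This split is possible because $1/2\le1/p,1/q\le1$. When $1/p+1/q\ge3/2$, set $\theta_0=0$ and use Young's inequality together with monotonicity of $\ell^p$ norms. In both cases the powers are $K_X^{\theta_0/4}K_Y^{\theta_0/4}$ with $\theta_0=\theta$, because $a_X=a_Y=2$ gives $3-a=1$.

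\textbf{Main obstacle: uniformity of constants.} The implied constant must depend only on $\deg V$, $\deg W$, $p$, $q$ and $\eps$. \cref{prop:restricted-to-strong} chooses $\eta=\eta(2,\eps)$, and $C_\eta$ then depends only on the degrees and on $\eps$. The subpolynomial losses $\card A^{\eta}\le\card X^{\eta}$ are absorbed into $(\card X\card Y)^\eps$. There is no further difficulty, since the nonplanarity hypothesis is exactly the one required by \cref{cor:recover-jw}.
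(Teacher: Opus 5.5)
Your proposal is correct and follows the argument the paper intends. The paper gives no separate proof, only the remark after \cref{thm:offdiag-alpha2-intro} that for surfaces one may take the line-concentration parameters as $K_X,K_Y$: the hereditary hypotheses come from \cref{cor:recover-jw} together with the monotonicity $\Lambda_V(A)\le\Lambda_V(X)$ for $A\subset X$, and \cref{cor:alpha2-offdiag} then gives the bound with $\theta_0=\theta$.
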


\begin{corollary}\label{cor:threefold-offdiag}
Let $X,Y$ lie on quadratic threefolds satisfying the hypotheses of \cref{thm:quadratic-threefold-intro}.  Then, for every $1\leq p,q\leq2$ with $1/p+1/q\geq1$,
\begin{equation}\label{eq:threefold-offdiag}
 \norm[\ell^2]{f*g}
 \ll_{p,q,\eps}(\card X\card Y)^\eps
 \norm[\ell^p]{f}\norm[\ell^q]{g}.
\end{equation}
\end{corollary}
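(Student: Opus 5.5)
The estimate in \cref{cor:threefold-offdiag} will follow from \cref{thm:offdiag-alpha2-intro}, in its proved form \cref{cor:alpha2-offdiag}, once we check the hereditary hypotheses with $K_X=K_Y=1$.

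The first step is the hereditary energy bound. Every subset $A\subset X$ lies on the same quadratic threefold $\Sigma(Q_1,Q_2)$. \Cref{thm:quadratic-threefold-intro} places no restriction on the finite set beyond $A\subset\Sigma$, and its implied constant depends only on $\eps$ and the fixed pair of forms. Hence for every $\eta>0$ and every $A\subset X$ we get $E(A)\ll_\eta\card A^{2+\eta}$, and likewise $E(B)\ll_\eta\card B^{2+\eta}$ for every $B\subset Y$, which may lie on a different threefold $\Sigma(Q_1',Q_2')$ satisfying the same hypotheses. So the hereditary hypotheses of \cref{thm:offdiag-alpha2-intro} hold with $K_X=K_Y=1$.

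The second step applies \cref{cor:alpha2-offdiag} for $1\le p,q\le2$ with $1/p+1/q\ge1$. Since $K_X=K_Y=1$, the factor $(K_XK_Y)^{\theta/4}$ equals $1$ whatever the value of $\theta$, and we are left with exactly \eqref{eq:threefold-offdiag}.

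For the record, the mechanism behind \cref{cor:alpha2-offdiag} is the following. \Cref{prop:restricted-to-strong} with $a=2$ gives $\norm[L^4]{\widehat f}\ll\card X^\eps\norm[\ell^2]{f}$, as in \eqref{eq:weighted-threefold}. Combined with \eqref{eq:bilinear-correlation} and Cauchy--Schwarz, this yields the endpoint $\norm[\ell^2]{f*g}\ll(\card X\card Y)^\eps\norm[\ell^2]{f}\norm[\ell^2]{g}$. We then interpolate bilinearly with the two classical endpoints in \eqref{eq:classical-bilinear-endpoints} via \cref{thm:general-offdiag}, and use monotonicity of the norms when $1/p+1/q\ge3/2$.

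The only point needing care is the dependence of the implied constant. The constant in \cref{thm:quadratic-threefold-intro} depends on the specific forms through the normalizing linear change of variables, and that change does not affect energy. So the final constant depends on $p,q,\eps$ and the two fixed threefolds, but not on the sets $X,Y$. There is no genuine obstacle here: this is a direct specialization.
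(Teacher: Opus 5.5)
Your proposal is correct and follows exactly the paper's route: the paper obtains \cref{cor:threefold-offdiag} by specializing \cref{thm:offdiag-alpha2-intro} (proved as \cref{cor:alpha2-offdiag}) with $K_X=K_Y=1$, and the hereditary hypothesis holds because \cref{thm:quadratic-threefold-intro} applies to every finite subset of the threefold. Your caveat that the constant might depend on the two threefolds is unnecessary: \cref{thm:quadratic-threefold-intro} is stated with an implied constant depending only on $\eps$, and the normalizing linear change preserves energy exactly, so taking the theorem at face value gives the uniform $\ll_{p,q,\eps}$ bound as stated.
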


The exponent region in \cref{cor:threefold-offdiag} is optimal up to the subpolynomial loss.  Indeed, taking $g$ to be a point mass forces $p\leq2$, and symmetrically $q\leq2$.  Taking $f=g=\1_X$ and using the diagonal lower bound $E(X)\gtrsim\card X^2$ forces
\[
 \frac1p+\frac1q\geq1.
\]

The concentration power in \cref{cor:surface-offdiag} is also sharp in the joint sense.  Let $X=Y$ be an arithmetic progression of length $N$ on a line contained in the surface.  Then
\[
 \norm[\ell^2]{\1_X*\1_X}\asymp N^{3/2},
 \qquad
 \Lambda_V(X)=N.
\]
When $1\leq1/p+1/q\leq3/2$, the right side of \eqref{eq:surface-offdiag} has $N$-exponent
\[
 \frac1p+\frac1q+\frac\theta2=\frac32.
\]
Thus the displayed power of the product concentration parameter cannot be uniformly reduced.

\subsection{Critical exponents from dimension}
If $V^m\subset\R^{m+c}$ has maximal difference dimension, then \eqref{eq:mc-intro} and \cref{cor:weighted-flag} give the critical symmetric input exponent
\begin{equation}\label{eq:critical-p}
 p_c(m,c)=\frac4{\alpha_{m,c}}
 =\begin{cases}
 2,&m\leq2c,\\[1mm]
 \dfrac{4m}{3m-2c},&m>2c.
 \end{cases}
\end{equation}
Thus codimension two gives $p_c=2$ for $m=3,4$, while $p_c=20/11$ for $m=5$.  At exponents $a>2$, \cref{thm:general-offdiag} supplies the interpolation polygon generated by the symmetric geometric endpoint and the two classical Young endpoints.  Determining a larger genuinely asymmetric sharp region would require new mixed-energy information and is not a consequence of self-energy alone.

\section{Turning complexity and three-fold energy on curves}\label{sec:turning}

\subsection{Signed-convex complexity}
Let $P\subset\R^2$ be finite and let $\ell$ be a linear functional that is injective on $P$.  Choose a linear functional $m$ linearly independent of $\ell$ and enumerate
\[
 P=\{p_1,\dots,p_N\},
 \qquad
 \ell(p_1)<\cdots<\ell(p_N).
\]
Put
\begin{equation}\label{eq:adjacent-slopes}
 s_i=\frac{m(p_{i+1})-m(p_i)}{\ell(p_{i+1})-\ell(p_i)},
 \qquad 1\leq i<N.
\end{equation}
A consecutive point block $\{p_a,\dots,p_b\}$ is \emph{signed-convex} if $s_a,\dots,s_{b-1}$ is strictly increasing or strictly decreasing.  Let $\kappa_\ell(P)$ be the least number of consecutive signed-convex blocks in a partition of $P$.

If $m'$ is another complementary coordinate, then $m'=cm+d\ell$ with $c\neq0$, so the new slopes are $cs_i+d$.  Thus $\kappa_\ell(P)$ is independent of $m$.  One may define the affine-invariant complexity
\[
 \kappa(P)=\min_\ell\kappa_\ell(P),
\]
where the minimum is over projections injective on $P$.

\begin{lemma}\label{lem:convex-interpolation}
If the adjacent slopes of a finite graph data set
\[
 x_1<\cdots<x_M,
 \qquad (x_i,y_i)\in\R^2
\]
are strictly increasing, then there is a strictly convex function $f:\R\to\R$ with $f(x_i)=y_i$ for every $i$.  If the slopes are strictly decreasing, there is a strictly concave interpolant.
\end{lemma}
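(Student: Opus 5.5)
We build the interpolant explicitly. First we take the piecewise-linear interpolant $L$ through the data, and then we add a small strictly convex perturbation that does not disturb the values at the nodes. Put $s_i=(y_{i+1}-y_i)/(x_{i+1}-x_i)$ for $1\le i<M$. The concave case follows from the convex case applied to the data $(x_i,-y_i)$, since negating all $y_i$ reverses the order of the slopes. For $M=1$ we take $f(x)=y_1+(x-x_1)^2$. For $M\ge2$ we assume the slopes are strictly increasing.

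\emph{Step 1: the polygonal interpolant.} We extend $L$ linearly outside $[x_1,x_M]$, using slope $s_1$ to the left of $x_1$ and slope $s_{M-1}$ to the right of $x_M$. Then $L$ is the maximum of the affine functions $\ell_i(x)=y_i+s_i(x-x_i)$. This holds because the slopes increase, so each $\ell_i$ dominates the others exactly on the $i$-th interval. Hence $L$ is convex and satisfies $L(x_i)=y_i$. It is not strictly convex, since it is affine on each interval.

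\emph{Step 2: a bump that vanishes at the nodes.} We choose a smooth function $\phi\ge0$ with $\phi(x_i)=0$ for every $i$ and $\phi>0$ away from the nodes. A convenient choice is
\[
\phi(x)=\prod_{i=1}^M (x-x_i)^2 ,
\]
so that $\phi(x_i)=0$. We then set
\[
f=L+\delta\phi+\delta'g,
\]
where $g$ is a strictly convex function vanishing at all nodes. Since no nonzero convex function vanishes at three or more points, such a $g$ cannot exist, and this approach has to be changed.

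\emph{Step 2$'$: smooth the corners instead.} The cleaner route is to work on each interval separately. The corners of $L$ at the interior nodes have strictly positive jumps $s_i-s_{i-1}>0$. We replace $L$ on $[x_i,x_{i+1}]$ by
\[
L+\delta\,(x-x_i)(x_{i+1}-x)\cdot(-1)\cdot(-1),
\]
and choose signs so that the added term is a negative multiple of $(x-x_i)(x_{i+1}-x)$, that is, we add $\delta(x-x_i)(x-x_{i+1})$. On each interval this term is strictly convex with second derivative $2\delta$, and it vanishes at both endpoints. At the node $x_i$ the one-sided derivatives become $s_{i-1}+\delta(x_i-x_{i-1})$ from the left and $s_i-\delta(x_{i+1}-x_i)$ from the right. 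These remain strictly ordered provided
\[
\delta\max_j(x_{j+1}-x_{j-1})<\min_i(s_i-s_{i-1}).
\]
Outside $[x_1,x_M]$ we add $\delta(x-x_1)^2$ and $\delta(x-x_M)^2$ respectively. This keeps the derivative jumps at the end nodes nonnegative, because on the outer side the quadratic contributes derivative $0$ at the node, while on the inner side the adjacent quadratic contributes $\mp\delta$ times the interval length. To make those jumps strictly positive, we use instead
\[
\delta(x-x_1)^2+\delta(x_2-x_1)(x_1-x) \quad\text{for } x<x_1,
\]
which vanishes at $x_1$ and has derivative exactly $-\delta(x_2-x_1)$ there, matching the adjacent interval. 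We treat $x>x_M$ symmetrically.

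\emph{Step 3: conclude.} The resulting $f$ is continuous and interpolates the data. It is strictly convex on each closed piece, and its left derivative is at most its right derivative at every node, with all derivatives strictly increasing. Hence $f'$ is strictly increasing on all of $\R$, so $f$ is strictly convex. The main obstacle is the bookkeeping of one-sided derivatives at the nodes, and the single uniform smallness condition on $\delta$ displayed in Step 2$'$ handles it.
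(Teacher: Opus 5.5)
Your final construction is correct: Step 2$'$ together with the corrected end pieces works. It takes a different route from the paper. The paper works at the level of the derivative. It chooses node slopes $d_i$ interlacing the chord slopes ($d_1<\sigma_1$, $\sigma_{i-1}<d_i<\sigma_i$, $d_M>\sigma_{M-1}$). On each interval it builds a continuous strictly increasing $g_i$ running from $d_i$ to $d_{i+1}$ with mean $\sigma_i$, and then integrates. This gives a $C^1$ strictly convex interpolant.

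You instead perturb the polygonal interpolant by the explicit convex bumps $\delta(x-x_i)(x-x_{i+1})$, which vanish at the nodes. Convexity at the interior corners is kept by the single condition $\delta(x_{i+1}-x_{i-1})<s_i-s_{i-1}$. Your version is fully explicit and avoids the existence step for a monotone profile with prescribed endpoints and average, which the paper only sketches. The price is that your $f$ is piecewise quadratic with genuine corners at the interior nodes, whereas the paper's is $C^1$. This is harmless for the lemma as stated, which asks only for strict convexity.

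The write-up needs cleaning, because several intermediate claims are false even though you later supersede them.
\begin{itemize}
\item Delete Step 2. Its stated reason is also wrong: $\max\{0,|x|-1\}$ is a nonzero convex function with infinitely many zeros. What is true is that a \emph{strictly} convex function has at most two zeros.
\item The first display in Step 2$'$ literally adds the concave bump $\delta(x-x_i)(x_{i+1}-x)$. Write only $\delta(x-x_i)(x-x_{i+1})$.
\item Most importantly, your first choice of outer pieces fails. With $\delta(x-x_1)^2$ on $(-\infty,x_1]$, the left derivative at $x_1$ is $s_1$ and the right derivative is $s_1-\delta(x_2-x_1)$. The jump is therefore $-\delta(x_2-x_1)<0$, not nonnegative as you claim, and convexity breaks at $x_1$.
\item Your replacement $\delta(x-x_1)^2-\delta(x_2-x_1)(x-x_1)$ makes the jump exactly zero, not strictly positive as you say. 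Zero is all you need, since strict monotonicity of $f'$ already comes from the pieces.
\end{itemize}
With these fixes, and the analogous piece $\delta(x-x_M)^2+\delta(x_M-x_{M-1})(x-x_M)$ on $[x_M,\infty)$, Step 3 goes through.
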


\begin{proof}
We treat the increasing case.  Let
\[
 \sigma_i=\frac{y_{i+1}-y_i}{x_{i+1}-x_i}.
\]
Choose derivatives $d_i$ so that
\[
 d_1<\sigma_1,
 \qquad
 \sigma_{i-1}<d_i<\sigma_i\quad(2\leq i\leq M-1),
 \qquad
 d_M>\sigma_{M-1}.
\]
On each interval $[x_i,x_{i+1}]$, choose a continuous strictly increasing function $g_i$ with endpoint values $d_i,d_{i+1}$ and average $\sigma_i$.  Such a function is obtained by moving the transition point in a monotone piecewise-linear profile; it may be smoothed without changing the endpoint values or the integral.  Define
\[
 f(x)=y_i+\int_{x_i}^xg_i(t)\,dt
 \qquad(x\in[x_i,x_{i+1}]).
\]
Then $f$ is $C^1$, has strictly increasing derivative, and interpolates the data.  Extend the derivative monotonically outside $[x_1,x_M]$.  The decreasing case follows by replacing $y$ with $-y$.
\end{proof}

\subsection{A strip projection estimate}
\begin{lemma}\label{lem:L6-strips}
Let $g_1,\dots,g_K\in L^6(\R^2)$ have Fourier supports contained in pairwise disjoint strips of the form
\[
 \{\xi:a_j<\ell(\xi)<b_j\},
\]
with disjoint intervals $(a_j,b_j)$.  For every $\delta>0$,
\begin{equation}\label{eq:L6-strips}
 \norm[L^6]{\sum_{j=1}^K g_j}^6
 \ll_\delta K^{4+\delta}\sum_{j=1}^K\norm[L^6]{g_j}^6.
\end{equation}
\end{lemma}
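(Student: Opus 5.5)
The plan is to regard $\sum_j g_j$ as the value of a multilinear-free linear operator on vector-valued inputs. I will interpolate between an $L^2$ orthogonality bound and a crude $L^p$ triangle-inequality bound with $p$ very large. The required gain of nearly one power of $K$ over the trivial Hölder bound $K^5$ should come from orthogonality at $L^2$.

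\textbf{Setup.} For each $j$ let $P_j$ be the Fourier multiplier with symbol $\1_{(a_j,b_j)}(\ell(\xi))$. After a linear change of variables making $\ell$ a coordinate functional, $P_j$ acts in one variable only. It is a combination of modulated Hilbert transforms in that variable. By Fubini and the $L^p$ boundedness of the Hilbert transform, $\norm[L^p(\R^2)]{P_jh}\le C_p\norm[L^p]{h}$ for $1<p<\infty$, with $C_p$ independent of $j$ and of the interval. Define
\[
 T(h_1,\dots,h_K)=\sum_{j=1}^K P_jh_j .
\]
Since $\widehat{g_j}$ is supported in the $j$-th strip, $P_jg_j=g_j$, and hence $T(g_1,\dots,g_K)=\sum_j g_j$.

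\textbf{Two endpoint bounds.} At $L^2$, the functions $P_jh_j$ have disjointly supported Fourier transforms. Plancherel therefore gives
\[
 \norm[L^2]{T(h)}\le\Bigl(\sum_j\norm[L^2]{h_j}^2\Bigr)^{1/2},
\]
so $T$ has norm at most $1$ from $\ell^2(L^2)$ to $L^2$. At a large exponent $p<\infty$, the triangle inequality and Hölder in $j$ give
\[
 \norm[L^p]{T(h)}\le C_p\sum_j\norm[L^p]{h_j}\le C_pK^{1-1/p}\Bigl(\sum_j\norm[L^p]{h_j}^p\Bigr)^{1/p}.
\]

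\textbf{Interpolation.} Complex interpolation of the mixed-norm spaces $\ell^r(L^r)$ uses the standard identity $[\ell^2(L^2),\ell^p(L^p)]_\theta=\ell^6(L^6)$, where
\[
 \frac16=\frac{1-\theta}2+\frac\theta p .
\]
It yields
\[
 \norm[L^6]{\textstyle\sum_j g_j}\le C_p^{\theta}K^{\theta(1-1/p)}\Bigl(\sum_j\norm[L^6]{g_j}^6\Bigr)^{1/6}.
\]
As $p\to\infty$ we have $\theta\to2/3$, so $6\theta(1-1/p)\to4$. Given $\delta>0$, choose $p=p(\delta)$ large enough that $6\theta(1-1/p)\le4+\delta$. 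Raising the inequality to the sixth power then gives \eqref{eq:L6-strips} with implied constant $C_{p(\delta)}^{6\theta}$.

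\textbf{Main obstacle.} The only delicate point is uniformity of the multiplier bound for $P_j$ over arbitrary, possibly unbounded, intervals. I will handle it by writing $\1_{(a,b)}=\tfrac12\bigl(\operatorname{sgn}(\cdot-a)-\operatorname{sgn}(\cdot-b)\bigr)$, with the obvious modification for half-lines. Each sign multiplier is a modulated Hilbert transform in the $\ell$-direction, whose $L^p$ bound is independent of the modulation. Fubini lifts this one-dimensional bound to $\R^2$. Everything else is routine. The dependence of the constant on $\delta$ enters only through the choice of $p$.
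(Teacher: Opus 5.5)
Your proposal is correct and follows essentially the same route as the paper: the operator $T(h)=\sum_j P_jh_j$, the norm-one $\ell^2(L^2)\to L^2$ bound from orthogonality, the $\ell^r(L^r)\to L^r$ bound with factor $K^{1-1/r}$ from uniform interval-multiplier estimates, and interpolation at $\frac16=\frac{1-\theta}{2}+\frac{\theta}{r}$, where $6\theta(1-1/r)=4+\frac{4}{r-2}\to4$. Your reduction of the interval projections to modulated Hilbert transforms in the $\ell$-direction spells out the step the paper simply cites as the Hilbert-transform theorem.
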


\begin{proof}
After a linear change of coordinates, take $\ell(\xi)=\xi_1$.  Let $P_j$ be the Fourier projection to the $j$th strip and define
\[
 T((h_j)_j)=\sum_jP_jh_j.
\]
Orthogonality gives
\[
 T:\ell^2(L^2)\longrightarrow L^2
\]
with norm one.  For every $r>6$, interval projections in one Fourier coordinate are uniformly bounded on $L^r$ by the Hilbert-transform theorem, and therefore
\[
 \norm[L^r]{T(h_j)}
 \ll_r\sum_j\norm[L^r]{h_j}
 \leq K^{1-1/r}\left(\sum_j\norm[L^r]{h_j}^r\right)^{1/r}.
\]
Interpolate these vector-valued estimates at the parameter $\theta$ determined by
\[
 \frac16=\frac{1-\theta}{2}+\frac\theta r.
\]
This gives
\[
 \norm[L^6]{T(h_j)}
 \ll_r K^{\theta(1-1/r)}
 \left(\sum_j\norm[L^6]{h_j}^6\right)^{1/6}.
\]
A calculation yields
\[
 6\theta\left(1-\frac1r\right)
 =\frac{4(r-1)}{r-2}
 =4+\frac4{r-2}.
\]
Choose $r$ so that $4/(r-2)<\delta$.
\end{proof}

\subsection{Admissible thickening}
For a finite $P\subset\R^2$, let
\[
 \cS_3(P)=\{p_1+p_2+p_3-p_4-p_5-p_6:p_i\in P\}.
\]
Choose a Schwartz function $\phi$ whose Fourier transform is supported in a sufficiently small ball and normalize $\int|\phi|^6=1$.  If the support is small enough that
\[
 \supp\widehat{|\phi|^6}\cap\bigl(\cS_3(P)\setminus\{0\}\bigr)=\varnothing,
\]
then
\begin{equation}\label{eq:admissible-thickening}
 \norm[L^6]{\phi(x)\sum_{p\in P}e^{2\pi ip\cdot x}}^6=J_3(P).
\end{equation}
This is the admissible thickening used in \cite{CDW}.  Since $P$ is finite, the Fourier support of $\phi$ may simultaneously be chosen small enough that different consecutive point blocks occupy disjoint $\ell$-strips.

\subsection{Proof of the complexity theorem}
\begin{proof}[Proof of \cref{thm:turning-intro}]
Fix a partition into $K=\kappa_\ell(P)$ consecutive signed-convex blocks
\[
 P=P_1\sqcup\cdots\sqcup P_K.
\]
Write $n_j=\card{P_j}$.  The invertible coordinate map $p\mapsto(\ell(p),m(p))$ preserves all additive relations.  By \cref{lem:convex-interpolation} and the theorem of Cushman--Demeter--Wu \cite{CDW}, for every $\eta>0$,
\begin{equation}\label{eq:leaf-CDW}
 J_3(P_j)\ll_\eta n_j^{3+\eta}.
\end{equation}
For a common admissible thickening, define
\[
 F_j(x)=\phi(x)\sum_{p\in P_j}e^{2\pi ip\cdot x}.
\]
Their Fourier supports lie in pairwise disjoint strips, and
\[
 J_3(P)=\norm[L^6]{\sum_jF_j}^6,
 \qquad
 \norm[L^6]{F_j}^6=J_3(P_j).
\]

The block sizes may be uneven.  Group them dyadically: for $r\geq0$, let
\[
 \cJ_r=\{j:2^r\leq n_j<2^{r+1}\},
 \qquad
 K_r=\card{\cJ_r},
 \qquad
 N_r=\sum_{j\in\cJ_r}n_j,
\]
and put $F^{(r)}=\sum_{j\in\cJ_r}F_j$.  By \cref{lem:L6-strips} and \eqref{eq:leaf-CDW},
\begin{align}
 \norm[L^6]{F^{(r)}}^6
 &\ll_\eta K_r^{4+\eta}\sum_{j\in\cJ_r}n_j^{3+\eta}\notag\\
 &\ll_\eta K_r^{5+\eta}2^{r(3+\eta)}
 \asymp K_r^2N_r^{3+\eta}.
\end{align}
Hence
\[
 \norm[L^6]{F^{(r)}}
 \ll_\eta K^{1/3}N_r^{1/2+\eta/6}.
\]
There are at most $1+\log_2N$ nonempty classes.  Minkowski and H\"older in the class index give
\begin{align}
 J_3(P)^{1/6}
 &\leq\sum_r\norm[L^6]{F^{(r)}}\notag\\
 &\ll_\eta K^{1/3}
 (1+\log N)^{1/2-\eta/6}N^{1/2+\eta/6}.
\end{align}
Taking sixth powers and choosing $\eta<\eps$ proves
\[
 J_3(P)\ll_\eps K^2N^{3+\eps}.
\]
\end{proof}

\subsection{Sharp examples}
\begin{proposition}\label{prop:turning-sharp}
For every pair of integers $K,n\geq1$, one can find a set $P\subset\R^2$ with $\card P=Kn$ and an injective projection $\ell$ such that
\[
 \kappa_\ell(P)\asymp K,
 \qquad
 J_3(P)\asymp K^2\card P^3.
\]
\end{proposition}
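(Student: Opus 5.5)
The construction takes $K$ copies of a single set with near-maximal energy and spreads them along a convex arc. We need $J_3(P)\gtrsim K^2(Kn)^3=K^5n^3$. Choose $K$ scaled copies of a small lattice-like configuration, each placed so that additive relations between different copies hold simultaneously. The concrete choice is to let $P_0$ be $n$ points on the parabola $\{(x,x^2)\}$ with $x\in\{1,\dots,n\}$, giving $J_3(P_0)\asymp n^3$; this is the diagonal-scale count, since $J_3(P_0)\ge 6n^3-O(n^2)$ from permutations and $\ll n^{3+\eps}$ by \cite{CDW}. We then set $P=\bigcup_{j=1}^K (P_0+v_j)$, where the translates $v_j=(jT,0)$ use a large $T$, so that the projection $\ell(x,y)=x$ stays injective and the blocks are consecutive.

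\emph{Lower bound on $J_3$.} Sextuples $(p_1+v_{j_1},p_2+v_{j_2},p_3+v_{j_3},p_4+v_{j_4},p_5+v_{j_5},p_6+v_{j_6})$ solve the equation whenever $p_1+p_2+p_3=p_4+p_5+p_6$ in $P_0$ and $v_{j_1}+v_{j_2}+v_{j_3}=v_{j_4}+v_{j_5}+v_{j_6}$. Because the $v_j$ form an arithmetic progression, the number of solutions of the translate equation is $\asymp K^5$. Hence
\[
J_3(P)\ge K^5 J_3(P_0)\gtrsim K^5n^3=K^2\card P^3.
\]

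\emph{Upper bound on $J_3$.} The trivial bound $J_3(P)\le \card P^5$ is too weak. Instead, Theorem~\ref{thm:turning-intro} gives $\ll K^2\card P^{3+\eps}$, which only matches up to the $\eps$ loss. To get a clean $\asymp$, count directly. Taking $T$ larger than $10n$, and using that second coordinates lie in $[1,n^2]$, a solution forces the first-coordinate equation to split as $\sum x_i$ plus $T\sum j_i$. Since $|\sum x_i-\sum x_i'|<3n<T$, both the translate-index equation and the $P_0$ equation hold separately. The $y$-coordinates carry no translate, so the count factorizes exactly:
\[
J_3(P)=\#\{j\text{-solutions}\}\cdot J_3(P_0)\asymp K^5 n^3.
\]
The bound $J_3(P_0)\ll n^3$ holds without any $\eps$ loss, because for consecutive-integer parabola points the relevant count is the Vinogradov-type $J_{3,2}(n)\asymp n^3$, a classical fact that follows from the $k=2$ main conjecture (Bourgain--Demeter--Guth or the elementary result of Hua). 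As a fallback, one can accept $\asymp$ up to $\card P^{\eps}$.

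\emph{Complexity.} Within each block the adjacent slopes are $2x+1$, which is increasing. At a junction between blocks the slope $(m(p_{i+1})-m(p_i))/(\ell(p_{i+1})-\ell(p_i))$ is roughly $(1-n^2)/T$, which is small and negative, so the slope drops after the large values $\approx 2n$. This breaks monotonicity provided $n\ge 3$; the small-$n$ cases are handled by constants. Consequently $\kappa_\ell(P)\le K$, with each block taken as one piece. For $\kappa_\ell(P)\gtrsim K$: any signed-convex run containing a full block and its junction must be increasing and then decrease, so each run covers at most about two blocks' worth of junctions. This gives $\kappa_\ell(P)\ge K/2$.

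The main obstacle is the exact $n^3$ upper bound for $J_3(P_0)$ without an $\eps$ loss; if only $\card P^{\eps}$ precision is intended, the proposition follows immediately from Theorem~\ref{thm:turning-intro} together with the lower bound above.
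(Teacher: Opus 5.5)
\textbf{There is a genuine gap.} Your architecture matches the paper's: an arithmetic progression of well-separated translates of one convex block, with the first coordinate forcing the equation to split. Your lower bound, the exact factorization $J_3(P)=J_3(\{0,\dots,K-1\})\,J_3(P_0)$, and the complexity count $\kappa_\ell(P)\asymp K$ are all fine.

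The gap is the step you yourself flagged. For $P_0=\{(x,x^2):1\le x\le n\}$ the claim $J_3(P_0)\asymp n^3$ is false. Here $J_3(P_0)=J_{3,2}(n)$ is the Vinogradov mean value in the critical case $s=k(k+1)/2=3$. In that case no form of the main conjecture (Bourgain--Demeter--Guth, Hua, or the older $k=2$ theory) gives better than $n^{3+\eps}$, and the loss is real: $J_{3,2}(n)\asymp n^3\log n$. This is Rogovskaya's asymptotic, and equivalently Bourgain's observation that the $L^6$ norm of $\sum_{x\le n}e^{2\pi i(x\alpha+x^2\beta)}$ carries an extra $(\log n)^{1/6}$.

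To see the lower bound, take any of the $\asymp n$ values $s\equiv0\pmod 3$ near $3n/2$ as the common value of $x_1+x_2+x_3$ and $y_1+y_2+y_3$. The vectors $x-\tfrac s3(1,1,1)$ then run over all points of the lattice $\{v\in\Z^3:v_1+v_2+v_3=0\}$ in a disc of radius $\asymp n$; its norm form is $2(a^2-ab+b^2)$. The quadratic equation says that $x$ and $y$ give vectors of equal norm. The second moment of the representation numbers of $a^2-ab+b^2$ up to $Q\asymp n^2$ is $\asymp Q\log Q$. Summing over $s$ gives $\gg n^3\log n$.

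Because your factorization is exact, your set actually satisfies $J_3(P)\asymp K^5n^3\log n=K^2\card P^3\log n$. The ratio $J_3(P)/(K^2\card P^3)$ is therefore unbounded as $n\to\infty$, so the construction does not give the stated two-sided bound. Your fallback, \emph{up to $\card P^{\eps}$}, is a strictly weaker statement.

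The missing idea is to choose a convex block with no nontrivial three-fold additive coincidences, so that $J_3(P_0)\asymp n^3$ becomes elementary. The paper uses $\{(i,B^i):1\le i\le n\}$ with $B\ge4$. Each base-$B$ digit of $B^{i_1}+B^{i_2}+B^{i_3}$ is a multiplicity at most $3<B$, so the sum determines the multiset $\{i_1,i_2,i_3\}$, and hence $n^3\le J_3\le 6n^3$. The adjacent slopes $(B-1)B^i$ are still strictly increasing, so your separation and complexity arguments go through verbatim. The paper also shifts packet $j$ vertically by $(B^n-B)j$ so that the junction slopes are exactly zero; your purely horizontal shift, with negative junction slopes, works equally well.
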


\begin{proof}
Set $N=Kn$.  Fix an integer $B\geq4$ and put
\[
 a_i=B^i,
 \qquad 1\leq i\leq n.
\]
Choose $H>3n$ and $C=a_n-a_1$, and define
\begin{equation}\label{eq:packet-example}
 P_{K,n}=\{(Hj+i,Cj+a_i):0\leq j<K,\ 1\leq i\leq n\}.
\end{equation}
Within each packet, the adjacent slopes are
\[
 a_{i+1}-a_i=(B-1)B^i
\]
and are strictly increasing.  The slope joining two consecutive packets is zero.  Thus the slope sequence has $K-1$ strict resets and
\[
 \kappa_\ell(P_{K,n})\asymp K
\]
for the first-coordinate projection.

Suppose two ordered triples in $P_{K,n}$ have the same sum.  The first coordinate gives
\[
 H\Delta J+\Delta I=0,
 \qquad \card{\Delta I}<3n<H,
\]
so $\Delta J=\Delta I=0$.  The second coordinate then gives $\Delta A=0$.  Consequently the equality separates into an equality of three-term sums in $\{0,\dots,K-1\}$ and an equality of three-term vector sums in
\[
 B_n=\{(i,B^i):1\leq i\leq n\}.
\]
Because $B\geq4$, a sum of three powers $B^i$ uniquely determines the multiset of exponents.  Hence
\[
 J_3(B_n)\asymp n^3.
\]
Also
\[
 J_3(\{0,\dots,K-1\})\asymp K^5.
\]
Therefore
\[
 J_3(P_{K,n})\asymp K^5n^3=K^2N^3.
\]
\end{proof}

Choosing integer sequences with $K\asymp N^\beta$ gives the full sharp power spectrum
\[
 J_3(P)\asymp N^{3+2\beta},
 \qquad 0\leq\beta\leq1.
\]

\section{Higher-dimensional consequences, sharpness, and limitations}\label{sec:higher-dimensional}

\subsection{Maximal difference dimension}
If $V^m\subset\R^{m+c}$ has maximal difference dimension, then \eqref{eq:mc-intro} gives
\begin{equation}\label{eq:generic-exponent}
 \alpha_{m,c}=\max\left\{2,3-\frac{2c}{m}\right\}.
\end{equation}
Thus
\[
 \begin{array}{c|c|c}
 (m,c)&\sigma&\alpha_{m,c}\\ \hline
 (2,1)&1&2\\
 (3,2)&1&2\\
 (4,2)&2&2\\
 (3,1)&2&7/3\\
 (5,2)&3&11/5
 \end{array}
\]
The cases $m\leq2c$ are near-diagonal; $m=2c$ is critical for the $N^\eps$ iteration; and $m<2c$ is subcritical, as illustrated by the genuine $D^{-1}$ gain in \cref{thm:quadratic-threefold-intro}.

\subsection{Quadratic lower bounds}
Let $Q_1,\dots,Q_c$ be fixed integer quadratic forms on $\R^m$ and consider
\[
 V_Q=\{(u,Q_1(u),\dots,Q_c(u)):u\in\R^m\}.
\]
For
\[
 X_M=\{(u,Q_1(u),\dots,Q_c(u)):u\in\{1,\dots,M\}^m\},
 \qquad N=M^m,
\]
one has
\[
 \card{X_M-X_M}\ll M^{m+2c}.
\]
Cauchy--Schwarz therefore gives
\begin{equation}\label{eq:quadratic-lower}
 E(X_M)\geq\frac{N^4}{\card{X_M-X_M}}
 \gg N^{3-2c/m}.
\end{equation}
For a generic choice of the forms $Q_j$, the difference map has maximal image dimension.  Together with the diagonal contribution, this shows that no uniform theorem for all bounded-degree varieties in the maximal-difference class can improve the exponent in \eqref{eq:generic-exponent}.  Special varieties can improve on the universal exponent: in four dimensions, sphere geometry breaks the $7/3$ threshold, while the paraboloid realizes it; see Mudgal \cite{Mudgal}.

\subsection{Sharpness of the flag power}
Suppose a variety $V_0$ admits finite sets $B$ with bounded base flag parameter and
\[
 E(B)\asymp\card B^a.
\]
Let $H$ be a one-dimensional affine factor and let $A\subset H$ be an arithmetic progression of length $L$.  On the cylinder $V=V_0\times H$, take $X=B\times A$.  Then
\[
 E(X)=E(B)E(A)\asymp\card B^aL^3.
\]
Writing $N=\card X=L\card B$, the flag parameter is comparable to $L$ under the stated bounded-base hypothesis, and one obtains
\begin{equation}\label{eq:flag-sharp}
 E(X)\asymp L^{3-a}N^a.
\end{equation}
Thus the power $3-a$ in \cref{thm:flagged-main} is forced whenever the base exponent is attained with bounded base flag complexity.  In particular, a generic finite subset of a parabola, tensored with an arithmetic progression, gives the sharp case $a=2$.

\subsection{What the flag theorem does not remove}
The flag theorem handles arbitrary bounded-degree varieties by charging the maximum occupancy of exceptional translate intersections.  This is sometimes too crude.  In the quadratic threefold, the rank-drop planes may contain many selected points, but \cref{lem:gram-rankdrop} shows that their total difference energy is only quadratic.  A sharper general theorem would replace the maximum flag occupancy by an aggregate quantity such as
\begin{equation}\label{eq:aggregate-exceptional}
 \mathfrak G_W(X)
 =\frac1{\card X^2}
 \sum_{\substack{t:\ \dim(W\cap(W+t))>\sigma(W)}}r_X(t)^2.
\end{equation}
Controlling \eqref{eq:aggregate-exceptional} requires additional geometry of the critical locus of the difference map.  For quadratic graphs it becomes a determinantal rank problem; for general varieties it is related to secant and contact defects and to translation rulings.  This is the principal obstruction to a parameter-free theorem for every bounded-degree variety.

A second open direction concerns the off-diagonal region when $\alpha(V)>2$.  Self-energy supplies a symmetric $\ell^{4/\alpha}\to L^4$ endpoint, and \cref{thm:general-offdiag} gives its interpolation polygon.  A larger sharp asymmetric region would require genuinely mixed geometric energy estimates rather than Cauchy--Schwarz between two self-energies.

\section{Statement on the use of AI}
AI systems were used extensively during exploratory work and preparation.  ChatGPT 5.6 Pro assisted with the discovery and checking of the difference-fiber recurrence, the flag formulation, the codimension-two quadratic model, the weighted interpolation argument, the turning-complexity examples, literature searches, exposition, and LaTeX preparation.  The author retains responsibility for independently verifying every argument and for all final mathematical and expository decisions.

\enlargethispage{4\baselineskip}

\end{document}